\newlength\figureheight
\newlength\figurewidth
\newtheorem{theorem}{Theorem}[section]
\newtheorem{definition}{Definition}[section]
\newtheorem{problem}{Problem}[section]
\newtheorem{example}{Example}[section]
\begin{document}
	\title{An Arnoldi-based approach to polynomial and rational least squares problems}
	\providecommand{\keywords}[1]{\textit{Keywords: } #1}
	\date{\today}

	\author[1a*]{Amin Faghih}
	\author[1b]{Marc Van Barel}
	\author[2c]{Niel Van Buggenhout}
	\author[1d]{Raf Vandebril}
	\affil[1]{Department of Computer Science, KU Leuven, University of Leuven, Leuven, Belgium}
	\affil[2]{Department of Mathematics, Universidad Carlos III de Madrid, Madrid, Spain}

\affil[a*]{Corresponding author: Amin Faghih \texttt{amin.faghih@kuleuven.be}}
\affil[b]{Marc Van Barel \texttt{marc.vanbarel@kuleuven.be}}
\affil[c]{Niel Van Buggenhout \texttt{nvanbugg@math.uc3m.es}}
\affil[d]{Raf Vandebril \texttt{raf.vandebril@kuleuven.be}}

	\renewcommand*{\Affilfont}{\small\it}

	\maketitle	
\begin{abstract}
In this research, we solve polynomial, Sobolev polynomial, rational, and Sobolev rational least squares problems. Although the increase in the approximation degree allows us to fit the data better in attacking least squares problems, the ill-conditioning of the coefficient matrix fuels the dramatic decrease in the accuracy of the approximation at higher degrees. To overcome this drawback, we first show that the column space of the coefficient matrix is equivalent to a Krylov subspace. Then the connection between orthogonal polynomials or rational functions and orthogonal bases for Krylov subspaces in order to exploit Krylov subspace methods like Arnoldi orthogonalization is established. Furthermore, some examples are provided to illustrate the theory and the performance of the proposed approach.
\end{abstract}
\begin{keywords}
Least squares, Krylov subspace, Arnoldi, Orthogonal bases, Sobolev orthogonal bases.
\end{keywords}
\\
\begin{small} {\textbf{AMS subject classification:}} 41A10, 41A20, 65D15, 65D05, 65F25.
	\end{small}
	\renewcommand{\thefootnote}{\fnsymbol{footnote}}

	\section{Introduction}
	A basic problem in science is to fit a model to observations subject to errors. It is clear that the more observations that are available, the more accurately it will be possible to calculate the parameters in the model. This gives rise to the so-called least squares problem or the problem of solving an overdetermined linear or nonlinear system of equations. It can be shown that the solution which minimizes a weighted sum of the squares of the residual is optimal in a certain sense. Applications of least squares occur in many areas of applied and engineering research such as statistics, photogrammetry,
signal processing, and control \cite{1,2,3}. Three standard ways to solve the least squares problem are: the normal equations, the QR decomposition, and the singular value decomposition (SVD). These techniques can be found in several text books such as the one from Björck \cite{2}, Demmel \cite{1}, and Lawson and Hanson \cite{3}.

Least squares can be used to fit a set of data to the best fit polynomial of a specified degree. Let $\{z_j,f_j\}_{j=1}^m$ be the set of $m$ data points in question. Given $n < m-1$, the least squares solution
\[p(t)=\sum_{k=0}^{n}c_kt^{k},\]
is determined through solving the overdetermined system of equations $V\bold{c} \approx \bold{f}$, including the Vandermonde matrix $V$	\begin{equation*}
		\underbrace{\begin{bmatrix}
			1& z_1 & \dots & z_1^n\\
			1& z_2 & \dots & z_2^n\\
			\vdots& \vdots &  & \vdots\\
			1& z_m & \dots & z_m^n
		\end{bmatrix}}_{=:V} \begin{bmatrix}
			c_0\\
			c_1\\
			\vdots\\
			c_n
		\end{bmatrix} \approx \begin{bmatrix}
			f_1\\
			f_2\\
			\vdots\\
			f_m
		\end{bmatrix},
	\end{equation*}

	We expect the increase in the polynomial degree to let us fit the data better. However, due to the ill-conditioning of the Vandermonde matrix, a dramatic increase in the residual norm for a certain degree may happen \cite{1,2,3}. To tackle this difficulty, a good choice of basis functions, e.g., an orthonormal basis, can lead to better fits and less ill-conditioned systems \cite{2,3,12}.
	
	The connection between orthogonal polynomials and orthogonal bases for Krylov
subspaces \cite{4} allows the use of Krylov subspaces methods for solving least squares problems. A polynomial Krylov subspace is defined for a matrix
$A \in \mathbb{C}^{m \times m}$ and starting vector $\bold{v} \in \mathbb{C}^{m}$ as
\[\mathcal{K}_{n+1}(A, \bold{v}) := \operatorname{span}\{\bold{v}, A\bold{v}, A^{2} \bold{v}, . . . , A^{n} \bold{v}\}.\]
Krylov subspace methods have been used to solve the problem of constructing a sequence of orthogonal polynomials for a given inner product and a set of data \cite{5,6,7,8,9}.\\

Recently, Brubeck et al. \cite{10} exploited the connection between a Krylov subspace
method, the Arnoldi iteration, and a polynomial least squares problem directly, by
reinterpreting the columns of the Vandermonde matrix as a basis for a Krylov
subspace. Such an interpretation has also been used implicitly in the work by Reichel et al. \cite{Re91,ReAmGr91} for solving least squares problems. They proposed two procedures. The former uses the Arnoldi iteration
to obtain coefficients of the approximating polynomial in a basis of orthonormal
polynomials, thereby avoiding explicit construction of the, often ill-conditioned,
Vandermonde system. The latter reuses the recurrence matrix computed by the
Arnoldi iteration to evaluate the obtained approximating polynomial in a set of given nodes.\\

	In addition, Niu et al. \cite{11} handled the Hermite least squares problem, where the least squares criterion involves both
function values and the first derivative values, and they extended the Vandermonde with Arnoldi approach introduced by Brubeck et al. \cite{10} to dealing with the confluent Vandermonde matrix
	\begin{equation*}
	\begin{bmatrix}
	1 & z_1 & z_1^2 & \dots & z_1^{n}\\
	1 & z_2 & z_{2}^2 & \dots & z_{2}^{n}\\
	\vdots & \vdots & \vdots & & \vdots \\
	1 & z_m & z_{m}^2 & \dots & z_{m}^{n}\\
		0 & 1 & 2 z_1 & \dots & n z_1^{n-1}\\
		0 & 1 & 2 z_2 & \dots & n z_2^{n-1}\\		
				\vdots & \vdots & \vdots & & \vdots \\
				0 & 1 & 2 z_m & \dots & n z_{m}^{n-1}\\				
		\end{bmatrix}.
	\end{equation*}
		They proposed to consider the Krylov subspace generated by the block matrix
		\[A :=\begin{bmatrix}
		Z\\
		I & Z
	\end{bmatrix}, \quad Z = diag(\{z_{j}\}_{j=1}^{m})\in \mathbb{C}^{m\times m},\]
	and the block vector $\bold{v}:= \begin{bmatrix}
		\boldsymbol{1}\\
		\boldsymbol{0}
	\end{bmatrix}$.
	The Krylov matrix corresponding to the space $\mathcal{K}_n(A,\bold{v})$ is then a row permutation of the aforementioned confluent Vandermonde matrix.\\
	
	To solve the least squares problem formulated by Brubeck et al. \cite{10}, Vandermonde with Arnoldi uses as a starting vector a vector of all ones. In this paper, the connection between the orthonormal basis for a Krylov subspace generated by the Arnoldi iteration and a sequence of polynomials orthonormal with
respect to a specific weighted discrete inner product is first formalized. It is this connection which links a least squares problem to Krylov subspaces. This formalization allows for a straightforward generalization by considering the weights in the inner product as the elements of the starting vector. In fact, this enables us to extend Vandermonde with Arnoldi to solve the weighted polynomial least squares problem.
	
	Although Niu et al. \cite{11} tackled the Hermite least squares problem through the link to the Krylov subspaces, both the numerical examples and the detailed strategy are for the first derivative case, and the first derivative and the function values in all nodes are considered to exist. Moreover, the starting vector is again assumed to be vector of all ones. We however handle the weighted Sobolev least squares problem \cite{13} involving the derivative values up to an arbitrary order. This is related to the confluent Vandermonde matrix, and we show that it can be interpreted using a Krylov subspace generated by a Jordan-like matrix \cite{25}. The orthonormal basis for such a Krylov subspace is related to Sobolev orthogonal polynomials \cite{14}.

Rational Krylov subspaces \cite{16} have
received a lot of attention in recent years \cite{17,18,19,20,21,22,23}. We proceed by generalizing the Arnoldi approach to rational Arnoldi allowing us to solve rational, and Sobolev rational least squares problems. The same approach can be taken to connect the rational least squares problem with the rational Krylov subspaces. Indeed we exploit an orthonormal rational basis which results in a less ill-conditioned system than a system including a Cauchy matrix.
	
Moreover, a new family of rational functions orthogonal with respect to a weighted discrete Sobolev inner product \cite{14,25}, which involves not only the function values but also the derivative values up to a given order, is introduced. The rational Arnoldi iteration gives a Hessenberg pencil implying the recurrence relation of this novel class of orthogonal functions \cite{23,24}, and the Sobolev rational least squares problem is then tackled based on these functions. For this purpose, we link the main problem to a rational Krylov subspace generated by a Jordan-like matrix.

 We also clarify the connection to structured matrices \cite{30} for both the polynomial and rational case. Analyzing the displacement structure allows us to identify which least squares problems can be solved with Krylov subspace techniques.\\

	This paper is arranged as follows. In Section \ref{s2}, we establish the
connection between the basis for polynomial Krylov subspaces and orthogonal polynomials which allows a straightforward generalization to solving weighted polynomial least squares. Section \ref{s3} handles the
Sobolev least squares problem, and we find a Krylov subspace created by a Jordan-like matrix to generate the column space of the confluent Vandermonde matrix. Section \ref{s4} presents a rational Arnoldi to solve the rational least squares problem, where the approximation is sought in a space of rational functions. Section \ref{s5} is devoted to constructing the recurrence relation of Sobolev orthogonal rational functions through rational Arnoldi. The coefficient matrix arising from Sobolev rational least squares problem,
that also includes information on the derivatives, can be generated by a rational Krylov subspace involving a Jordan-like matrix. Each section includes two algorithms, one is dedicated to computing the recurrence coefficients, and the second one indicates the evaluation of approximation at a set of sampling nodes. The numerical experiments are presented in Section \ref{s6} and include a comparison to the techniques without employing Arnoldi.
	\section{Weighted polynomial least squares problem}\label{s2}
	We first start this section by introducing the following weighted polynomial least squares problem, and then continue by taking the approach of Vandermonde with Arnoldi \cite{10} to solve the problem.
	\begin{problem}[Weighted polynomial least squares]\label{prob:wLS}
		Given a set of distinct nodes and function values $\{z_j,f_j\}_{j=1}^m$ with corresponding weights $\{w_j\}_{j=1}^m$, construct a polynomial $p\in\mathcal{P}_n$, $n\leq m-1$, minimizing
		\begin{equation}\label{eq1}
			\sum_{j=1}^m \vert w_j\vert^2 \vert p(z_j)-f_j\vert^2,
		\end{equation}
		in which $\mathcal{P}_n$ is the space of all polynomials of degree at most $n$.
	\end{problem}
	The relation \eqref{eq1} can be rewritten as $\sum_{j=1}^m  \vert w_j p(z_j)- w_jf_j\vert^2$, and Problem \ref{prob:wLS} is equivalent to the system of equations $WV\bold{c}\approx W\bold{f}$
	\begin{equation}\label{eq2}
		W\begin{bmatrix}
			1& z_1 & \dots & z_1^n\\
			1& z_2 & \dots & z_2^n\\
			\vdots& \vdots &  & \vdots\\
			1& z_m & \dots & z_m^n
		\end{bmatrix} \begin{bmatrix}
			c_0\\
			c_1\\
			\vdots\\
			c_n
		\end{bmatrix} \approx W \begin{bmatrix}
			f_1\\
			f_2\\
			\vdots\\
			f_m
		\end{bmatrix},
	\end{equation}
	including the Vandermonde matrix $V$, and $W=diag(\{w_j\}_{j=1}^m)$.
Solving this system provides us with coefficients $\{c_k\}_{k=0}^n$ representing the least squares solution, i.e., $p(t) = \sum_{k=0}^n c_k t^k$. Here, we refuse to solve this system of equations directly, and instead of using an ill-conditioned monomial basis, we reformulate \eqref{eq2} into a problem based on an orthonormal basis.
	
It is straightforward to see that the coefficient matrix $WV$ is in fact the Krylov matrix
\begin{equation*}
		B_{n+1}= \begin{bmatrix}
 			\bold{v} & Z \bold{v} & \dots & Z^{n}\bold{v}
 		\end{bmatrix},
	\end{equation*}
associated with the polynomial Krylov subspace $\mathcal{K}_{n+1}(Z,\bold{v})$ with \begin{equation*}
Z = diag(\{z_{j}\}_{j=1}^{m})\in \mathbb{C}^{m\times m}, \quad \quad \text{and} \quad \quad \bold{v}=\begin{bmatrix}
		w_1 & \dots & w_m
	\end{bmatrix}^\top\in\mathbb{C}^m.
	\end{equation*}

	Now, it is time to formalize the connection between the orthonormal basis for the Krylov subspace generated by the Arnoldi iteration and a sequence of orthonormal polynomials. Indeed, this connection links the least squares problem to Krylov subspaces.
\subsection{Arnoldi and orthogonal polynomials}
	No breakdown will occur applying the Arnoldi procedure, since the columns of the Vandermonde matrix are linearly independent for a set of distinct nodes.
	As orthonormal bases are more suited to the numerical computation, one uses the Arnoldi iteration to compute a nested orthonormal basis $Q_{n+1} = \begin{bmatrix}
		\bold{q}_0 & \bold{q}_1 &\dots & \bold{q}_n
	\end{bmatrix}$, satisfying $B_{n+1} = Q_{n+1} R$, with an upper triangular matrix $R\in\mathbb{C}^{(n+1)\times (n+1)}$. In fact, the Arnoldi iteration generates the orthonormal basis $Q_{n+1}\in\mathbb{C}^{m\times (n+1)}$ and an upper-Hessenberg matrix $\underline{H}\in\mathbb{C}^{(n+1)\times n}$ containing the recurrence coefficients for the columns of $Q_{n+1}$, i.e., the equation $ZQ_{n} = Q_{n+1} \underline{H}$ holds.\\
	
Now, the question is: when does the recurrence matrix for a nested orthogonal basis for $\mathcal{K}_{n+1}(Z,\bold{v})$ contain recurrence coefficients for a sequence of orthogonal polynomials $\{p_{k}\}_{k=0}^{n}$? The following theorem relates vectors in Krylov subspaces with polynomials and states how the
inner products correspond.
	\begin{theorem}[Krylov induced orthogonal polynomials]\label{theorem:KrylovOPs}
		Consider the diagonal matrix $Z = diag(\{z_{j}\}_{j=1}^{m})\in \mathbb{C}^{m\times m}$ and the vector $\bold{v}=\begin{bmatrix}
		w_1 & \dots & w_m
	\end{bmatrix}^\top\in\mathbb{C}^m$.
		Let $\begin{bmatrix}
			\bold{q}_0 & \bold{q}_1 & \dots & \bold{q}_{n}
		\end{bmatrix} = Q_{n+1} \in\mathbb{C}^{m\times (n+1)}$ form a nested orthonormal basis for $\mathcal{K}_{n+1}(Z,\bold{v})$.
		Suppose that polynomials $p_k\in\mathcal{P}_k$ satisfy $\bold{q}_k = p_k(Z)\bold{v}$.
		Then, the sequence of polynomials $\{p_k\}_{k=0}^{n}$ consists of orthonormal polynomials with respect to the inner product
		\begin{equation}\label{eqinner}
			\langle p_k,p_h\rangle_m =  \sum_{j=1}^{m}\vert w_j\vert^2 p_k(z_j)\overline{p_h(z_j)}.
		\end{equation}
	\end{theorem}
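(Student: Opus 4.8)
The plan is to transport the Euclidean orthonormality of the Arnoldi basis $\{\mathbf{q}_k\}_{k=0}^{n}$ verbatim into the discrete inner product $\langle\cdot,\cdot\rangle_m$, using nothing more than the fact that $Z$ is diagonal. First I would note that the standing hypothesis $\mathbf{q}_k = p_k(Z)\mathbf{v}$ is automatic: because the basis is nested, $\mathbf{q}_k \in \mathcal{K}_{k+1}(Z,\mathbf{v}) = \operatorname{span}\{\mathbf{v}, Z\mathbf{v}, \dots, Z^{k}\mathbf{v}\}$, so there are scalars $a_0,\dots,a_k$ with $\mathbf{q}_k = \sum_{i=0}^{k} a_i Z^{i}\mathbf{v} = p_k(Z)\mathbf{v}$, where $p_k(t) = \sum_{i=0}^{k} a_i t^{i} \in \mathcal{P}_k$. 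Hence such polynomials $p_k$ exist and are precisely the objects the theorem refers to.

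The key step is to exploit the diagonal structure. Since $Z = \operatorname{diag}(z_1,\dots,z_m)$, every polynomial evaluated at $Z$ is again diagonal, $p_k(Z) = \operatorname{diag}(p_k(z_1),\dots,p_k(z_m))$, and therefore the $j$-th entry of $\mathbf{q}_k = p_k(Z)\mathbf{v}$ is simply $w_j\, p_k(z_j)$. I would then expand the standard Hermitian inner product of two basis vectors componentwise:
\[
\mathbf{q}_k^{*}\mathbf{q}_h \;=\; \sum_{j=1}^{m} \overline{w_j\, p_k(z_j)}\; w_j\, p_h(z_j) \;=\; \sum_{j=1}^{m} |w_j|^2\, \overline{p_k(z_j)}\, p_h(z_j),
\]
which is exactly $\overline{\langle p_k, p_h\rangle_m}$, equivalently $\langle p_h, p_k\rangle_m$. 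Thus the map $p \mapsto p(Z)\mathbf{v}$ is an isometry from $(\mathcal{P}_n, \langle\cdot,\cdot\rangle_m)$ into $\mathbb{C}^m$ equipped with the Euclidean inner product.

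To finish, I would invoke the orthonormality of the Arnoldi basis, $\mathbf{q}_k^{*}\mathbf{q}_h = \delta_{kh}$. Since the Kronecker delta is real and symmetric in its indices, the conjugation (and index swap) in the identity above is harmless, and it collapses to $\langle p_k, p_h\rangle_m = \delta_{kh}$ for all $0 \le k, h \le n$. In particular $\langle p_k, p_k\rangle_m = 1$ forces each $p_k$ to be normalized, while $\langle p_k, p_h\rangle_m = 0$ for $k \neq h$ gives orthogonality, which is the claim; this also confirms a posteriori that $\langle\cdot,\cdot\rangle_m$ is positive definite on $\operatorname{span}\{p_0,\dots,p_n\}$, so the word orthonormal is meaningful. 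I expect no genuine obstacle: the argument reduces entirely to recognizing the diagonal structure of $p_k(Z)$, and the only point demanding care is the bookkeeping of the complex conjugates so that the two inner products are matched under the correct Hermitian convention.
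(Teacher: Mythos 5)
Your proof is correct. The paper itself does not write out an argument for this theorem: it simply cites Liesen and Strako\v{s} (Section 3.7.4 of their book), so you have supplied the elementary computation that the authors outsource. Your route is the natural one and, notably, it is exactly the computation the authors \emph{do} carry out later for the Sobolev rational analogue (Theorem 5.1): write $\delta_{kh}=\bold{q}_h^H\bold{q}_k=\bold{v}^H p_h(Z)^H p_k(Z)\bold{v}$, use the fact that $p_k(Z)=\mathrm{diag}(p_k(z_1),\dots,p_k(z_m))$ for diagonal $Z$, and read off $\sum_j |w_j|^2 p_k(z_j)\overline{p_h(z_j)}$. Your bookkeeping of conjugates is right: with the paper's convention $\langle p_k,p_h\rangle_m=\sum_j|w_j|^2 p_k(z_j)\overline{p_h(z_j)}$ one gets $\bold{q}_h^H\bold{q}_k=\langle p_k,p_h\rangle_m$ directly, and since $\delta_{kh}$ is real and symmetric the transposition you introduce is harmless. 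Your opening observation that the existence of $p_k\in\mathcal{P}_k$ with $\bold{q}_k=p_k(Z)\bold{v}$ is automatic from nestedness is a small bonus the paper leaves implicit. What the citation buys the authors is brevity and a pointer to the general framework (arbitrary normal or diagonalizable $A$); what your version buys is a self-contained, checkable two-line argument that also makes transparent why the same template extends to the Jordan-block and rational cases treated later in the paper.
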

	\begin{proof}
		This is a straightforward consequence of an existing study of Liesen and Strakoš \cite{4}, given in Section 3.7.4.
	\end{proof}

	Theorem \ref{theorem:KrylovOPs} reveals that an orthonormal basis for $\mathcal{K}_{n+1}(Z,\bold{v})$ corresponds to the set of orthonormal polynomials $\{p_k\}_{k=0}^{n}$. This is the key observation on which the Vandermonde with Arnoldi procedure is based. Indeed, this approach employs orthonormal polynomials as basis functions which are far superior to monomials \cite{2,3,12}.\\

By equation \eqref{eq2} and $WV=B_{n+1}$, we have
\begin{equation*}
 B_{n+1}\bold{c}=W\bold{f} \xrightarrow{\text{Arnoldi}} Q_{n+1}R\bold{c}=W\bold{f} \xrightarrow{\bold{y}:=R\bold{c}} Q_{n+1}\bold{y}=W\bold{f},
\end{equation*}
in which $\bold{y}=\begin{bmatrix}
		y_0 & \dots & y_n
	\end{bmatrix}^\top\in\mathbb{C}^{n+1}$ contains the coefficients of $p(t)$ in a basis of the orthonormal polynomials. Equation \eqref{eq2} is now solved in its equivalent form $Q_{n+1}\bold{y}=W\bold{f}$.
	
According to Theorem \ref{theorem:KrylovOPs}, applying the Arnoldi iteration to $\mathcal{K}_{n+1}(Z,\bold{v})$ results in $Q_{n+1}$ satisfying
\begin{equation*}
		\underbrace{W\begin{bmatrix}
			p_0(z_1)& p_1(z_1) & \dots & p_n(z_1)\\
			p_0(z_2)& p_1(z_2) & \dots & p_n(z_2)\\
			\vdots& \vdots &  & \vdots\\
			p_0(z_m)& p_1(z_m) & \dots & p_n(z_m)\\
		\end{bmatrix}}_{Q_{n+1}} \begin{bmatrix}
			y_0\\
			y_1\\
			\vdots\\
			y_n
		\end{bmatrix} \approx W \begin{bmatrix}
			f_1\\
			f_2\\
			\vdots\\
			f_m
		\end{bmatrix}.
	\end{equation*}
	The Arnoldi iteration also gives us a Hessenberg recurrence matrix $\underline{H}$ generating the orthonormal polynomial sequence $\{p_{k}\}_{k=0}^{n}$ via
	
	\begin{equation}\label{eq3}
		t \begin{bmatrix}
			p_0 & p_1 & \dots & p_{n-1}
		\end{bmatrix} = \begin{bmatrix}
		p_0 & p_1 & \dots & p_{n-1} & p_n
	\end{bmatrix} \underline{H}.
	\end{equation}
	Finally, the vector of unknowns $\bold{y}$ can be calculated through $\bold{y}=Q_{n+1}^H W\bold{f}$, and thereby the least squares solution shall be $p(t)=\sum_{k=0}^{n} y_{k} p_{k}(t)$.\\
	
	A small adjustment to \verb|polyfitA| proposed by Brubeck et al. \cite{10} allows us to include weights and enables us to compute the vector of unknowns $\bold{y}$ through Algorithm \ref{alg:1}.
	\begin{algorithm}[H]
		\caption{The evaluation process of the unknown vector $\bold{y}$}\label{alg:1}
		\begin{algorithmic}[1]
			\State \textbf{Input:} $Z \in \mathbb{C}^{m\times m}$, $\bold{f} \in \mathbb{C}^{m}$, $\bold{v}\in\mathbb{C}^m$, the integer $n$
			\State \textbf{Output:} Hessenberg matrix $\underline{H}\in\mathbb{C}^{(n+1)\times n}$, the unknown vector $\bold{y} \in \mathbb{C}^{n+1}$
			\Procedure{}{}
			\State $\bold{q}_0 = \bold{v}/ \Vert \bold{v}\Vert$\Comment{Arnoldi iteration}
			\For{$k=1,2,\dots,n$}
			\State $\bold{q}_k = Z \bold{q}_{k-1}$
			\For{$j=1,2,\dots, k$}\Comment{Orthogonalization}
			\State $h_{j,k} = \langle \bold{q}_k,\bold{q}_j\rangle_E$
			\State $\bold{q}_k = \bold{q}_k - h_{j,k} \bold{q}_j$
			\EndFor
			\State $h_{k+1,k} = \Vert \bold{q}_k\Vert$
			\State $\bold{q}_k = \bold{q}_k/h_{k+1,k}$ \Comment{Normalization}
			\EndFor
			\State $\bold{y} = Q_{n+1}^H \textrm{diag}(w_1,\dots,w_m)\bold{f}$\Comment{With $\bold{v}=\begin{bmatrix}
					w_1 & \dots & w_m
				\end{bmatrix}^\top\in\mathbb{C}^m$}
			\EndProcedure\end{algorithmic}
	\end{algorithm}	
	Note that we have chosen to use the normalization $Q_{n+1}^H Q_{n+1} = I$, which is different from the normalization in \cite{10}, where they used a scaled unitary matrix.
	
	For general complex nodes $\{z_j\}_j$, the Arnoldi method performs well and the Hessenberg matrix encodes a long recurrence relation for orthogonal polynomials. For nodes on the real line $z_j\in \mathbb{R}$ or on the unit circle $z_j = e^{i \theta_j}$, with $\theta_j\in [0,2\pi]$, there exist short recurrence relations \cite{Sz75} and, therefore, the standard Arnoldi iteration performs unnecessary computations. A straightforward specialization of the Arnoldi iteration for real nodes suggests the Lanczos iteration which generates a tridiagonal matrix, however the Lanczos iteration is sensitive to rounding errors.
	Two algorithms that exploit the short recurrence relation and are robust to rounding errors are those of Gragg and Harrod \cite{9}, and Laurie \cite{La99}. Reichel applied the former to solve least squares problems involving real nodes \cite{Re91}.
	For nodes on the unit circle, an efficient algorithm exploiting the short recurrence relations has also been proposed \cite{ReAmGr91}.
	
	In view of the structure of $\underline{H}$, we can exploit the recurrence relation \eqref{eq3} and evaluate the least squares solution $p(t)$ at an arbitrary set of points $\{x_{j}\}_{j=1}^{M}$ through the underlying algorithm. In Algorithm \ref{alg:2}, the same process as Algorithm \ref{alg:1} is applied with a new matrix $X =diag(\{x_{j}\}_{j=1}^{M})\in \mathbb{C}^{M\times M}$, and we call the resulting matrix
		\begin{equation*}
	U_{n+1}:=	\begin{bmatrix}
					p_0(x_1)& p_1(x_1) & \dots & p_n(x_1)\\
					p_0(x_2)& p_1(x_2) & \dots & p_n(x_2)\\
					\vdots& \vdots &  & \vdots\\
					p_0(x_M)& p_1(x_M) & \dots & p_n(x_M)\\
			\end{bmatrix},
		\end{equation*}	
		satisfying
		\begin{equation*}
		XU_{n} = U_{n+1}\underline{H}.
		\end{equation*}
		Now, the vector $\bold{p}$ of order $M$ including $p(x_{j})$ is obtained if $U_{n+1}\bold{y}$ is evaluated. Meanwhile, the orthonormal polynomial of degree zero $p_{0}(t)=\frac{1}{\sqrt{w_{1}^{2}+...+w_{m}^{2}}}$ can be directly computed through the inner product \eqref{eqinner}.
			\begin{algorithm}[H]
			\caption{The least squares solution $\bold{p}$}\label{alg:2}
			\begin{algorithmic}[1]
				\State \textbf{Input:} $\bold{y} \in \mathbb{C}^{n+1}$, $\underline{H}\in\mathbb{C}^{(n+1)\times n}$, $X \in \mathbb{C}^{M \times M}$, $p_{0}(t)$
				\State \textbf{Output:} $U_{n+1} \in \mathbb{C}^{M\times (n+1)}$, $\bold{p} \in \mathbb{C}^{M}$
				\Procedure{}{}
				\State $\bold{u}_0 =\bold{p}_{0}$\Comment{With $\bold{p}_{0}$ as a $M$-vector of all $p_{0}(t)$}
				\For{$k=1,2,\dots,n$}
				\State $\bold{u}_k = X \bold{u}_{k-1}$
				\For{$j=1,2,\dots, k$}
				\State $\bold{u}_k = \bold{u}_k - h_{j,k} \bold{u}_j$
				\EndFor
				\State $\bold{u}_k = \bold{u}_k/h_{k+1,k}$ \Comment{Computing $\{p_{k}\}_{k=1}^{n}$ at sampling points via the recurrence relation \eqref{eq3}}
				\EndFor
				\State $\bold{p} = U_{n+1}\bold{y} $\Comment{The least squares solution}
				\EndProcedure\end{algorithmic}
		\end{algorithm}	
In the next subsection, by analyzing the displacement structure of the Vandermonde matrix, we will connect polynomial least squares problems to Krylov subspace methods.
	\subsection{Displacement rank and polynomial Krylov subspaces}\label{2.4}
	Any Krylov matrix associated with a Krylov subspace generated by a diagonalizable matrix is related to a Vandermonde matrix and vice versa.
	A Vandermonde matrix $V$ is characterized by its displacement rank \cite{30,26}.
	
	For the displacement operator based on $Z = \textrm{diag}(\{z_j\}_{j=1}^m)\in\mathbb{C}^{m\times m}$ and the left-shift matrix $S_{n+1} \in\mathbb{C}^{(n+1)\times (n+1)}$, we can write
	\begin{equation*}
		Z V - V S_{n+1} = \begin{bmatrix}
			& z_1^{n+1}\\
			0 & \vdots\\
			& z_m^{n+1}
		\end{bmatrix}.
	\end{equation*}
	This means that the displacement rank of the Vandermonde matrix $V$ is equal to $1$ for this specific displacement operator.\\

	This implies that the premultiplication of $V$ with $Z$ generates the next column in the matrix.
	This is the essential property of Krylov subspaces. They are generated by repeated premultiplication of a vector by a constant matrix.\\
	In the other direction, any Krylov matrix is related to a Vandermonde matrix and has displacement rank equal to 1. The following derivation reveals this relation for diagonalizable matrices.
	Substitute the eigenvalue decomposition $A= X Z X^{-1}$ in $B_{n+1}$ and set $\bold{w}:=X^{-1}\bold{v}$. Then, we have
	\begin{align*}
		B_{n+1}\ &= \begin{bmatrix}
			\bold{v} & A \bold{v} & \dots & A^{n}\bold{v}\\
		\end{bmatrix}
		= \begin{bmatrix}
			\bold{v} & X Z X^{-1} \bold{v} & \dots & (X Z X^{-1})^{n}\bold{v}\\
		\end{bmatrix}\\
		&=X \begin{bmatrix}
			w_1 & z_1 w_1 & \dots & z_1^{n}w_1\\
			w_2 & z_2 w_2 & \dots & z_2^{n}w_2\\
			\vdots & \vdots &  & \vdots\\
			w_m & z_m w_m & \dots & z_m^{n}w_m\\
		\end{bmatrix}\\
		&= X \underbrace{\begin{bmatrix}
				w_1 \\
				& w_2\\
				& & \ddots \\
				& & & w_m
		\end{bmatrix} }_{=:W}
		\underbrace{\begin{bmatrix}
				1 & z_1 & \dots & z_1^{n}\\
				1 & z_2 & \dots & z_2^{n}\\
				\vdots & \vdots &  & \vdots\\
				1 & z_m & \dots & z_m^{n}\\
		\end{bmatrix}}_{=:V},
	\end{align*}
	and clearly $\textrm{rank}(A B_{n+1} -B_{n+1} S_{n+1}) = 1$.
	In fact, any pair of matrices $A,B_{n+1}$ satisfying this equality are suitable candidates to apply an Arnoldi iteration to them. Furthermore, the Jordan canonical form $A = X J X^{-1}$ can be used for defective matrices, and this will be related to a confluent Vandermonde matrix, which is discussed in the next section.
	\section{Weighted Sobolev polynomial least squares problem}\label{s3}
	Another interesting case can be investigated by considering a Jordan-like matrix instead of a diagonal matrix to generate the Krylov subspace.
	This is related to the weighted Sobolev least squares problem which is solved by taking a confluent Vandermonde with the Arnoldi strategy.
	\begin{problem}[Weighted Sobolev polynomial least squares]\label{prob:wHLS}
		Let $\{z_j,w_{j}\}_{j=1}^\sigma$ be the given nodes and corresponding weights, respectively. Given the first $s_{j}$ derivative values $\{f^{(s_{j})}_{j}\}_{j=1}^\sigma$, $s_{j}\leq s$, with $s \geq 0$, construct a polynomial $p\in\mathcal{P}_{n}$ which minimizes
		\begin{equation*}
			\sum_{j=1}^\sigma \sum_{i=0}^{s_{j}} \vert w_j\vert^2  \left\vert \frac{\prod_{r=1}^{i}\alpha_{r}^{(j)}}{i!} \right\vert^2 \vert p^{(i)}(z_j)-f^{(i)}_{j}\vert^2,
		\end{equation*}
	\end{problem}
	where $\alpha_{r}^{(j)}$ are non-zero and chosen freely\footnote{$\alpha_{r}^{(j)}$ will appear in the Jordan blocks.}.
		
		Considering an arbitrary $s$, let us define matrices $\mathcal{B}_{j}\in \mathbb{C}^{(s_{j}+1)\times (n+1)}$, $j=1,2,\dots,\sigma$ as
	\begin{equation*}
	\begin{bmatrix}
	0 & \dots & \dots &0&s_{j}! & \dots &\frac{n!}{(n-s_{j})!}z_{j}^{n-s_{j}}\\
	0 &\dots& 0& (s_{j}-1)!  &s_{j}!z_{j} &\dots&\frac{n!}{(n-s_{j}+1)!}z_j^{n-s_{j}+1}\\
		\vdots & \iddots &  & \vdots& \vdots &&\vdots\\
		0 & 1 & \dots& (s_{j}-1)
		z_{j}^{s_{j}-2} &s_{j}
		z_{j}^{s_{j}-1}&\dots&nz_{j}^{n-1}\\
				1 & z_{j} & \dots& z_{j}^{s_{j}-1} &  z_{j}^{s_{j}}&\dots&z_{j}^{n}
\end{bmatrix},
\end{equation*}
	including the derivatives of monomials up to order $s_{j}$ at $z_{j}$. Now, Problem \eqref{prob:wHLS} is translated into the following system of equations consisting of a confluent Vandermonde matrix $V^{(c)}\in \mathbb{C}^{m\times (n+1)}$
	\begin{equation*}
		W\underbrace{\left[\begin{array}{ccccccc}
		&&&\mathcal{B}_{1}&&&\\
		&&&\mathcal{B}_{2}&&&\\
		&&&\vdots&&&\\
		&&&\mathcal{B}_{\sigma}&&&\\			
		\end{array}\right]}_{=:V^{(c)}} \begin{bmatrix}
			c_0\\
			c_1\\
			\vdots\\
			c_{n}
		\end{bmatrix} \approx W \bold{f},
	\end{equation*}
	in which the right-hand side vector $\bold{f} \in \mathbb{C}^{m}$ equals
	\begin{equation*}
\begin{bmatrix}
				f^{(s_{1})}_1~
				f^{(s_{1}-1)}_{1}~
			\dots~
			f_{1}&
			f^{(s_{2})}_2~
			f^{(s_{2}-1)}_{2}~
			\dots~
			f_{2}&
			\dots&
			f^{(s_{\sigma})}_\sigma~
			f^{(s_{\sigma}-1)}_{\sigma}~
			\dots~
			f_\sigma
		\end{bmatrix}^\top,
		\end{equation*}
	and $W=diag(\bold{w})$, the diagonal matrix of weights and factors compensating for the repeated differentiation which introduces factorial terms, that is,
	\begin{equation}\label{eq26}
	\bold{w}=\begin{bmatrix}     \frac{\prod_{r=1}^{s_{1}}\alpha_{r}^{(1)}}{s_{1}!}w_{1}  \ldots  \frac{\alpha_{1}^{(1)}}{1!} w_{1}& w_{1} &   \frac{\prod_{r=1}^{s_{2}}\alpha_{r}^{(2)}}{s_{2}!}w_{2}  \ldots  \frac{\alpha_{1}^{(2)}}{1!} w_{2}& w_{2}  & \dots&
		\frac{\prod_{r=1}^{s_{\sigma}}\alpha_{r}^{(\sigma)}}{s_{\sigma}!}w_{\sigma}  \ldots  \frac{\alpha_{1}^{(\sigma)}}{1!} w_{\sigma}& w_{\sigma}  \end{bmatrix}^\top .
		\end{equation}
		Here, $\bold{w} \in \mathbb{C}^{m}$, and $m:=\sigma+\sum_{j=1}^\sigma s_{j}$.
	
	Notably, matrix $W V^{(c)}$ is equal to the Krylov matrix for the Krylov subspace $\mathcal{K}(J,\bold{v})$, equipped with the starting vector
	\begin{equation*}
	\bold{v}=\begin{bmatrix}      \overbrace{0 \ldots  0}^{s_{1}} & w_{1} &   \overbrace{0 \ldots  0}^{s_{2}} & w_{2} & \dots& \overbrace{0 \ldots  0}^{s_{\sigma}} & w_{\sigma} \end{bmatrix}^\top \in \mathbb{C}^{m},
		\end{equation*}
		and the Jordan-like matrix
	\begin{equation*}
	 J = \left[\begin{array}{cccc}
		J_{s_{1}} &&&\\
		 &J_{s_{2}}&&\\
		 &&\ddots&\\
		&&&J_{s_{\sigma}}\\
	\end{array}\right]\in \mathbb{C}^{m\times m},
	\end{equation*}
	containing $\sigma$ blocks of order $s_{j}+1$
	\begin{equation}\label{eq25}
	J_{s_{j}} = \begin{bmatrix}
		z_{j}&\alpha_{s_{j}}^{(j)}&&\\
		 &z_{j}&&\\
		 &&\ddots&\alpha_{1}^{(j)}\\
		&&&z_{j}\\
		
	\end{bmatrix}, \quad j=1,2,\ldots,\sigma.
	\end{equation}
	
	As a consequence, a confluent Vandermonde with Arnoldi procedure can be developed.
	
	The next subsection is dedicated to a generalization of Algorithm \ref{alg:1} to solve Problem \ref{prob:wHLS}. We will introduce an inner product by which a set of Sobolev polynomials are orthonormal via Arnoldi.
	\subsection{Arnoldi and Sobolev orthogonal polynomials}\label{sec:Arnoldi_Sobolev}
	By taking the same approach as the previous section, we expect Arnoldi to generate an orthonormal basis $Q_{n+1}$ for the Krylov subspace $\mathcal{K}_{n+1}(J,\bold{v})$ and a Hessenberg recurrence matrix $\underline{H}$. Problem \ref{prob:wHLS} is called the Sobolev least squares problem, since the basis $Q_{n+1}$ is related to polynomials orthonormal with respect to a Sobolev inner product.
		\begin{theorem}[Krylov induced Sobolev orthogonal polynomials]\label{theorem:KrylovSobOPs}
		Consider the Jordan-like matrix \begin{equation*}
	 J = \left[\begin{array}{cccc}
		J_{s_{1}} &&&\\
		 &J_{s_{2}}&&\\
		 &&\ddots&\\
		&&&J_{s_{\sigma}}\\
	\end{array}\right]\in \mathbb{C}^{m\times m},
	\end{equation*}
	containing $\sigma$ blocks as defined in \eqref{eq25}, and the vector \begin{equation*}
	\bold{v}=\begin{bmatrix}      \overbrace{0 \ldots  0}^{s_{1}} & w_{1} &   \overbrace{0 \ldots  0}^{s_{2}} & w_{2} & \dots& \overbrace{0 \ldots  0}^{s_{\sigma}} & w_{\sigma} \end{bmatrix}^\top \in \mathbb{C}^{m}.
		\end{equation*}
		 Let $\begin{bmatrix}
			\bold{q}_0 & \bold{q}_1 & \dots & \bold{q}_{n}
		\end{bmatrix} = Q_{n+1} \in\mathbb{C}^{m\times (n+1)}$ form a nested orthonormal basis for $\mathcal{K}_{n+1}(J,\bold{v})$.
		Assume that polynomials $p_k\in\mathcal{P}_k$ satisfy $\bold{q}_k = p_k(J)\bold{v}$.
		Then $\{p_k\}_{k=0}^{n}$ is the sequence of Sobolev orthonormal polynomials with respect to the inner product
		\begin{equation}\label{innerS}
			\langle p_k,p_h\rangle _S =  \sum_{j=1}^{\sigma} \sum_{i=0}^{s_{j}}\vert w_j\vert^2 \left\vert \frac{\prod_{r=1}^{i}\alpha_{r}^{(j)}}{i!} \right\vert^2 p^{(i)}_{k}(z_j) \overline{p^{(i)}_{h}(z_j)}.
		\end{equation}
	\end{theorem}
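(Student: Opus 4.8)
The plan is to follow the proof of Theorem~\ref{theorem:KrylovOPs}, but to replace the diagonal functional calculus by the functional calculus for a Jordan block. Since $Q_{n+1}$ is orthonormal in the Euclidean inner product and $\bold{q}_k = p_k(J)\bold{v}$, orthonormality of the columns reads $\langle p_k(J)\bold{v}, p_h(J)\bold{v}\rangle_E = \delta_{kh}$. Thus the theorem reduces to the single identity
\[
\langle p_k(J)\bold{v}, p_h(J)\bold{v}\rangle_E = \langle p_k, p_h\rangle_S ,
\]
after which orthonormality of $\{p_k\}$ with respect to $\langle\cdot,\cdot\rangle_S$ is immediate from $\langle p_k(J)\bold{v}, p_h(J)\bold{v}\rangle_E = \delta_{kh}$.

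First I would exploit the block-diagonal structure of $J = \operatorname{diag}(J_{s_1},\dots,J_{s_\sigma})$. For any polynomial $p$ we have $p(J) = \operatorname{diag}(p(J_{s_1}),\dots,p(J_{s_\sigma}))$, while $\bold{v}$ is supported block-by-block with its $j$-th block equal to $w_j\bold{e}_{s_j+1}$, i.e.\ $w_j$ times the last standard basis vector of that block. Consequently $p(J)\bold{v}$ decomposes as a direct sum of the vectors $w_j\,p(J_{s_j})\bold{e}_{s_j+1}$, and the Euclidean inner product splits into a sum over $j=1,\dots,\sigma$. It therefore suffices to analyse a single block.

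The key computation is the action of $p(J_{s_j})$ on the last basis vector. Writing $N := J_{s_j}-z_jI$ for the nilpotent part, whose superdiagonal entries are $\alpha_{s_j}^{(j)},\dots,\alpha_1^{(j)}$ from top to bottom by \eqref{eq25}, a short induction gives
\[
N^i\bold{e}_{s_j+1} = \left(\prod_{r=1}^{i}\alpha_r^{(j)}\right)\bold{e}_{s_j+1-i},\qquad i=0,1,\dots,s_j,
\]
so these images are scalar multiples of mutually orthogonal standard basis vectors. Combined with the Jordan functional calculus $p(J_{s_j}) = \sum_{i=0}^{s_j}\frac{p^{(i)}(z_j)}{i!}N^i$, this yields
\[
w_j\,p(J_{s_j})\bold{e}_{s_j+1} = \sum_{i=0}^{s_j} w_j\,\frac{p^{(i)}(z_j)}{i!}\left(\prod_{r=1}^{i}\alpha_r^{(j)}\right)\bold{e}_{s_j+1-i}.
\]
Taking the Euclidean inner product of the $p_k$- and $p_h$-versions of this expression, the orthogonality of the $\bold{e}$'s collapses the double sum to the single index $i$, and the resulting coefficient of $p_k^{(i)}(z_j)\overline{p_h^{(i)}(z_j)}$ is exactly $\vert w_j\vert^2\left\vert \prod_{r=1}^{i}\alpha_r^{(j)}/i!\right\vert^2$. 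Summing over $j$ reproduces \eqref{innerS}.

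The only delicate point is the bookkeeping inside the block: one must track the correct product of superdiagonal entries $\prod_{r=1}^{i}\alpha_r^{(j)}$ together with the factorial $1/i!$ coming from the Jordan calculus, and confirm that the ordering of the $\alpha_r^{(j)}$ along the superdiagonal of \eqref{eq25} matches the order in which they are picked up by successive applications of $N$. Once the identity for $N^i\bold{e}_{s_j+1}$ is checked against the definition of $J_{s_j}$, the remainder is a direct matching of coefficients, and I expect no conceptual obstacle beyond this indexing.
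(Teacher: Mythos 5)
Your proposal is correct. Note that for this theorem the paper itself gives no argument at all --- it simply cites Van Buggenhout \cite{25}, Section 3.2 --- so your write-up supplies a self-contained proof where the paper defers to a reference. Substantively, your computation mirrors the proof the paper \emph{does} spell out for the rational analogue (Theorem \ref{theorem:KrylovSobOrs}): there the authors write out the full upper-triangular matrix $r_k(J_{s_j})$, whose $i$-th superdiagonal carries the entries $\frac{\prod_{r=1}^{i}\alpha_{r}^{(j)}}{i!}\,r_k^{(i)}(z_j)$, and then evaluate $\mathbf{v}^H (r_h(J))^H r_k(J)\mathbf{v}$. You obtain the same coefficients more economically by acting only on the last basis vector of each block via the nilpotent Taylor expansion $p(z_jI+N)=\sum_{i=0}^{s_j}\frac{p^{(i)}(z_j)}{i!}N^i$, which is legitimate since $z_jI$ and $N$ commute and $N^{s_j+1}=0$. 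Your bookkeeping of the superdiagonal entries is also consistent with \eqref{eq25}: reading the superdiagonal from bottom to top gives $\alpha_1^{(j)},\alpha_2^{(j)},\dots,\alpha_{s_j}^{(j)}$, so successive applications of $N$ to $\mathbf{e}_{s_j+1}$ pick up exactly $\prod_{r=1}^{i}\alpha_r^{(j)}$, and the collapse of the double sum by orthogonality of the standard basis vectors reproduces \eqref{innerS}. The one point worth making explicit in a final version is that the block supports of $p_k(J)\mathbf{v}$ are disjoint, so the Euclidean inner product genuinely splits into the sum over $j$; you state this and it is immediate from the block-diagonal functional calculus.
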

	\begin{proof}
		The proof is done by Van Buggenhout \cite{25}, given in Section 3.2.
		\end{proof}
		
		To this end, the solution $p\in\mathcal{P}_n$ to Problem \ref{prob:wHLS} can be obtained in the well-conditioned basis $\{p_k\}_{k=0}^{n}$ as
	\begin{equation*}
		p(t) = \sum_{k=0}^n y_k p_k(t),
	\end{equation*}
	where the vector $\bold{y}=\begin{bmatrix}
		y_0 &  \dots & y_n
	\end{bmatrix}^{\top}$
	is the output of Algorithm \ref{alg:3}.
\begin{algorithm}[H]
	\caption{The evaluation process of the unknown vector $\bold{y}$}\label{alg:3}
	\begin{algorithmic}[1]
		\State \textbf{Input:} $J \in \mathbb{C}^{m\times m}$, $\bold{f} \in \mathbb{C}^{m}$, $W\in\mathbb{C}^{m \times m}$, $\bold{v}\in\mathbb{C}^m$, the integer $n$
		\State \textbf{Output:} Hessenberg matrix $\underline{H}\in\mathbb{C}^{(n+1)\times n}$, the unknown vector $\bold{y} \in \mathbb{C}^{n+1}$
		\Procedure{}{}
		\State $\bold{q}_0 = \bold{v}/ \Vert \bold{v}\Vert$\Comment{Arnoldi iteration}
		\For{$k=1,2,\dots,n$}
		\State $\bold{q}_k = J \bold{q}_{k-1}$
		\For{$j=1,2,\dots, k$}\Comment{Orthogonalization}
		\State $h_{j,k} = \langle \bold{q}_k,\bold{q}_j\rangle_E$
		\State $\bold{q}_k = \bold{q}_k - h_{j,k} \bold{q}_j$
		\EndFor
		\State $h_{k+1,k} = \Vert \bold{q}_k\Vert$
		\State $\bold{q}_k = \bold{q}_k/h_{k+1,k}$ \Comment{Normalization}
		\EndFor
		\State $\bold{y} = Q_{n+1}^H W\bold{f}$
		\EndProcedure\end{algorithmic}
\end{algorithm}	
	A set of sampling points $\{x_j\}_{j=1}^{\tau}$, in which to evaluate the least squares solution and the derivatives of the least squares solution of order at most $s$, can be input to Algorithm \ref{alg:4}. In this algorithm, we consider $\bold{S}$ as the column vector of sampling derivatives $\{\tilde{s}_{j}\}_{j=1}^{\tau}$, $\tilde{s}_{j} \leq s$, and $X$ is a Jordan-like matrix constructed by sampling nodes $\{x_j\}_{j=1}^{\tau}$.
	If we define the matrix $\mathcal{D}_{j} \in \mathbb{C}^{(\tilde{s}_{j}+1)\times (n+1)}$, $j=1,2,\ldots,\tau$, including the derivatives up to order $\tilde{s}_{j}$ of the Sobolev orthonormal polynomials at $x_{j}$, as follows
	\begin{equation*}
	\mathcal{D}_{j}:=	\begin{bmatrix}
			0  & 0& \dots&0&0&p_{\tilde{s}_{j}}^{(\tilde{s}_{j})}(x_{j})&\dots& p_{n}^{(\tilde{s}_{j})}(x_{j})\\
				0  &0& \dots &0& p_{\tilde{s}_{j}-1}^{(\tilde{s}_{j}-1)}(x_{j})&p_{\tilde{s}_{j}}^{(\tilde{s}_{j}-1)}(x_{j})&\dots& p_{n}^{(\tilde{s}_{j}-1)}(x_{j})\\
			\vdots & \vdots &  & \iddots& \vdots &\vdots&&\vdots\\
			
			0&0&\iddots&p_{\tilde{s}_{j}-2}^{\prime\prime}(x_{j}) &p_{\tilde{s}_{j}-1}^{\prime \prime}(x_{j})&p_{\tilde{s}_{j}}^{\prime \prime}(x_{j})&
			\dots& p_{n}^{\prime \prime}(x_{j})\\
			
			0 & p_{1}^{\prime}(x_{j}) & \dots&p_{\tilde{s}_{j}-2}^{\prime}(x_{j}) &p_{\tilde{s}_{j}-1}^{\prime}(x_{j})&p_{\tilde{s}_{j}}^{\prime}(x_{j}) &\dots& p_{n}^{\prime}(x_{j})\\
			p_{0}(x_{j}) & p_{1}(x_{j}) &\dots&p_{\tilde{s}_{j}-2}(x_{j})&p_{\tilde{s}_{j}-1}(x_{j})&p_{\tilde{s}_{j}}(x_{j}) &\dots & p_{n}(x_{j})
		\end{bmatrix},
	\end{equation*}
	the resulting matrix $U_{n+1} \in \mathbb{C}^{M\times (n+1)}$, $M:=\tau+\sum_{j=1}^\tau \tilde{s}_{j}$ is
	\begin{equation*}
	U_{n+1}:=	\left[\begin{array}{ccccc}
			&&\mathcal{D}_{1}&&\\
			&&\mathcal{D}_{2}&&\\
			&&\vdots&&\\
			&&\mathcal{D}_{\tau}&&\\
		\end{array}\right],
	\end{equation*}
	satisfying
	\begin{equation*}
		XU_{n} = U_{n+1}\underline{H}.
	\end{equation*}
	We can now compute the vector $\bold{p}:= U_{n+1}\bold{y}$ containing the approximations
	\begin{equation*}
		\bold{p}=\begin{bmatrix}     p^{(\tilde{s}_{1})}(x_{1}) \ldots  p^{\prime}(x_{1})& p(x_{1}) &   p^{(\tilde{s}_{2})}(x_{2}) \ldots  p^{\prime}(x_{2})& p(x_{2}) &  & \dots&
			p^{(\tilde{s}_{\tau})}(x_{\tau}) \ldots  p^{\prime}(x_{\tau})& p(x_{\tau})\end{bmatrix}^\top .
	\end{equation*}
	In addition, $p_{0}(t)=\frac{1}{\sqrt{w_{1}^{2}+...+w_{m}^{2}}}$ is the Sobolev orthonormal polynomial of degree zero computed through the inner product \eqref{innerS}.
		\begin{algorithm}[H]
		\caption{The least squares solution $\bold{p}$}\label{alg:4}
		\begin{algorithmic}[1]
			\State \textbf{Input:} $\bold{y} \in \mathbb{C}^{n+1}$, $\underline{H}\in\mathbb{C}^{(n+1)\times n}$, $X \in \mathbb{C}^{M \times M}$, $\tau$-vector $\bold{S}$, $p_{0}(t)$
			\State \textbf{Output:} $U_{n+1} \in \mathbb{C}^{M\times (n+1)}$, $\bold{p} \in \mathbb{C}^{M}$
			\Procedure{}{}
		\State $\bold{u}_0 = \begin{bmatrix}
			\mathcal{D}_{1,1}\\
			\mathcal{D}_{2,1}\\
			\vdots\\
			\mathcal{D}_{\tau,1}
		\end{bmatrix}$\Comment{	$\mathcal{D}_{j,1}$ is the first column of the matrix $\mathcal{D}_{j}$, for $j=1,2,\ldots,\tau$}
			\For{$k=1,2,\dots,n$}
			\State $\bold{u}_k = X \bold{u}_{k-1}$
			\For{$j=1,2,\dots, k$}
			\State $\bold{u}_k = \bold{u}_k - h_{j,k} \bold{u}_j$
			\EndFor
			\State $\bold{u}_k = \bold{u}_k/h_{k+1,k}$
			\EndFor
			\State $\bold{p} = U_{n+1}\bold{y}$\Comment{The least squares solution}
			\EndProcedure\end{algorithmic}
	\end{algorithm}	
	\section{Weighted rational least squares problem}\label{s4}
	The aforementioned stable approach to solve a polynomial least squares problem leads us to think how this strategy works for the rational case. In 1984, Ruhe \cite{16} introduced a generalization of polynomial Krylov subspaces to rational Krylov subspaces.
 	There are several essentially equivalent forms to define a rational Krylov subspace, the one used here is given as follows.
	\begin{definition}[Rational Krylov subspace \cite{27,16}]\label{def:rks}
		Let $A \in \mathbb{C}^{m\times m}$, $\bold{v}\in \mathbb{C}^m$,  $\Xi = \{\xi_k \}_{k=1}^{n}$ with $\xi_k=\frac{\mu_{k}}{\nu_{k}}\in\overline{\mathbb{C}}:= \mathbb{C} \cup \{ \infty \}$, and $\Phi = \{ \phi_k \}_{k=1}^n$ with $\phi_k= \frac{\rho_{k}}{\eta_{k}}\in \overline{\mathbb{C}}$.
		A rational Krylov subspace with poles $\xi_i$ and shifts $\phi_i$ is defined as
		\begin{equation*}
			\mathcal{K}_{n+1}(A,\bold{v};\Xi,\Phi):=  \operatorname{span}\left\{\bold{v}, \frac{\eta_{1} A-\rho_1 I}{\nu_{1} A-\mu_1 I}\bold{v},\dots , \prod_{k=1}^{n} \left(\frac{\eta_{k} A-\rho_{k} I}{\nu_{k} A-\mu_{k} I}\right) \bold{v} \right\}.
		\end{equation*}
		Here we have $\xi_{k} \ne \phi_{k}$, for $k=1,2,\dots,n$.
	\end{definition}	

Now let us define the weighted rational least squares problem.
	\begin{problem}[Weighted rational least squares]\label{prob:wCLS}
		Given data $\{z_j,f_j\}_{j=1}^m$, weights $\{w_j\}_{j=1}^m$, and the set of poles $\Xi$, construct a rational function $r\in\mathcal{R}_n^\Xi$, $n\leq m-1$, which minimizes
		\begin{equation*}
			\sum_{j=1}^m \vert w_j\vert^2 \vert r(z_j)-f_j\vert^2.
		\end{equation*}
		Here, $\mathcal{R}_n^\Xi:= \frac{\mathcal{P}_n}{{\underset{\xi_k \neq \infty }{\prod_{k=1}^{n}}(z-\xi_k)}}$.
	\end{problem}
	For finite poles, we can formulate Problem \ref{prob:wCLS} as an overdetermined system of equations involving matrix $W=diag(\{w_j\}_{j=1}^m)$ and $C\in\mathbb{C}^{m\times (n+1)}$
	\begin{equation}\label{Cauchy system}
		W\underbrace{\begin{bmatrix}
			1&\frac{1}{z_1-\xi_1} & \frac{1}{z_1-\xi_2} & \dots & \frac{1}{z_1-\xi_n}\\
			1&\frac{1}{z_2-\xi_1} & \frac{1}{z_2-\xi_2} & \dots & \frac{1}{z_2-\xi_n}\\
			\vdots & \vdots & & &\vdots \\
			1&\frac{1}{z_m-\xi_1} & \frac{1}{z_m-\xi_2} & \dots & \frac{1}{z_m-\xi_n}\\
		\end{bmatrix}}_{=:C}\begin{bmatrix}
	c_0\\
	c_1\\
	\vdots\\
	c_n
\end{bmatrix} \approx W\begin{bmatrix}
f_1\\
f_2\\
\vdots\\
f_m
\end{bmatrix},
 	\end{equation}
 	and solving this system results in the least squares solution $r(t) = c_{0}+\sum_{k=1}^{n} c_k \frac{1}{t-\xi_k}$.
 	
 	The basis $\{1,\frac{1}{t-\xi_1},\ldots, \frac{1}{t-\xi_n}\}$ of $\mathcal{R}_n^\Xi$ is related to the basis for the Krylov subspace $\mathcal{K}_{n+1}(Z,\bold{v};\Xi,\Phi)$ associated with the Krylov matrix
 	\begin{equation*}
 		B_{n+1} = \begin{bmatrix}
 			\bold{v} & \frac{ I}{Z-\xi_1 I} \bold{v} & \left(\frac{ I}{Z-\xi_1 I}\right) \left( \frac{Z-\xi_1 I}{Z-\xi_{2} I} \right)\bold{v}&\dots & \left(\frac{ I}{Z-\xi_1 I}\right)\prod_{k=1}^{n-1}\left( \frac{Z-\xi_k I}{Z-\xi_{k+1} I} \right)\bold{v}
 		\end{bmatrix} ,
 	\end{equation*}
 in which $\Phi = \{-\infty,\xi_1,\xi_2, \dots, \xi_{n-1} \}$, with $Z=diag(\{z_{j}\}_{j=1}^{m})\in \mathbb{C}^{m\times m}$ and
 \begin{equation*}
\bold{v}=\begin{bmatrix}
		w_1 & \dots & w_m
	\end{bmatrix}^\top\in\mathbb{C}^m.
	\end{equation*}

	A Cauchy matrix can be obtained by omitting the first column of $C$, and the same approach is also valid for this case. In this section we only consider distinct finite poles, which is related to Cauchy matrices. This connection between rational Krylov subspaces and Cauchy matrices allows a generalization of the Vandermonde with Arnoldi procedure which we call it Cauchy with Arnoldi. To do so, we require an Arnoldi-like iteration for rational Krylov subspaces. This is discussed in the next section.
	\subsection{Rational Arnoldi and orthogonal rational functions}
	 A procedure to compute an orthonormal basis for a rational Krylov subspace is due to Ruhe \cite{27} and is called the rational Arnoldi iteration. The following theorem states that an orthonormal basis can be generated by a recurrence relation that is governed by a Hessenberg pencil.
 	\begin{theorem}[Rational Arnoldi iteration \cite{27}]\label{theorem:RAI}
	 	Consider $A\in \mathbb{C}^{m\times m}$, $\bold{v}\in\mathbb{C}^m$, and the sets of poles $\Xi$ and shifts $\Phi$.
	 	Let $Q_{n+1}\in\mathbb{C}^{m\times (n+1)}$ be an orthonormal nested basis for the rational Krylov subspace $\mathcal{K}_{n+1}(A,\bold{v};\Xi,\Phi)$.
	 	Then $Q_{n+1}$ satisfies
	 	\begin{equation*}
	 		AQ_{n+1} \underline{K}= Q_{n+1} \underline{H},
	 	\end{equation*}
	 	for Hessenberg matrices $\underline{H},~ \underline{K}\in\mathbb{C}^{(n+1)\times n}$, with $\frac{(\underline{H})_{k+1,k}}{(\underline{K})_{k+1,k}} = \xi_k$, $k=1,2\dots, n$.
	 \end{theorem}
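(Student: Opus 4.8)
The plan is to exploit the explicit nested generating basis of the rational Krylov subspace, reduce the claim to a single two-term relation, and then transport that relation to the orthonormal basis through a triangular change of coordinates. First I would introduce the (unnormalized) generating vectors
\[
\bold{v}_0 = \bold{v}, \qquad \bold{v}_k = \left(\frac{\eta_k A - \rho_k I}{\nu_k A - \mu_k I}\right)\bold{v}_{k-1}, \quad k=1,\dots,n,
\]
so that by Definition \ref{def:rks} the vectors $\bold{v}_0,\dots,\bold{v}_k$ span $\mathcal{K}_{k+1}(A,\bold{v};\Xi,\Phi)$ for every $k$ and hence form a nested (non-orthogonal) basis of the whole chain. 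Clearing the denominator in the defining relation gives $(\nu_k A - \mu_k I)\bold{v}_k = (\eta_k A - \rho_k I)\bold{v}_{k-1}$, and collecting the terms premultiplied by $A$ yields the two-term identity
\[
A\bigl(\nu_k \bold{v}_k - \eta_k \bold{v}_{k-1}\bigr) = \mu_k \bold{v}_k - \rho_k \bold{v}_{k-1}, \qquad k=1,\dots,n.
\]
This identity already encodes the entire pencil structure; the remaining work is purely to change basis from $\{\bold{v}_k\}$ to the orthonormal $\{\bold{q}_k\}$.

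Next I would use that both $\{\bold{v}_0,\dots,\bold{v}_n\}$ and the columns of $Q_{n+1}$ are nested bases of the same increasing chain of subspaces. Consequently there is a nonsingular upper triangular matrix $R\in\mathbb{C}^{(n+1)\times(n+1)}$ with $[\bold{v}_0\ \cdots\ \bold{v}_n] = Q_{n+1} R$, i.e. $\bold{v}_k = Q_{n+1} R \bold{e}_{k+1}$, where $R\bold{e}_{k+1}$ is supported on its first $k+1$ rows and $(R)_{k+1,k+1}\neq 0$ by nestedness. Substituting this into the two-term identity and factoring $Q_{n+1}$ on the left gives, for each $k$,
\[
A Q_{n+1}\bigl(\nu_k R\bold{e}_{k+1} - \eta_k R\bold{e}_{k}\bigr) = Q_{n+1}\bigl(\mu_k R\bold{e}_{k+1} - \rho_k R\bold{e}_{k}\bigr).
\]
I would then define $\underline{K}$ and $\underline{H}$ column by column through $\underline{K}\bold{e}_k := \nu_k R\bold{e}_{k+1} - \eta_k R\bold{e}_{k}$ and $\underline{H}\bold{e}_k := \mu_k R\bold{e}_{k+1} - \rho_k R\bold{e}_{k}$ for $k=1,\dots,n$, so that assembling the $n$ relations produces exactly $A Q_{n+1}\underline{K} = Q_{n+1}\underline{H}$, with $\underline{K},\underline{H}\in\mathbb{C}^{(n+1)\times n}$.

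Finally I would read off the structure. Since $R$ is upper triangular, $R\bold{e}_{k+1}$ and $R\bold{e}_k$ vanish below rows $k+1$ and $k$, so the $k$-th columns of $\underline{K}$ and $\underline{H}$ vanish below row $k+1$, meaning both matrices are upper Hessenberg. Moreover only $R\bold{e}_{k+1}$ reaches row $k+1$, whence $(\underline{K})_{k+1,k} = \nu_k (R)_{k+1,k+1}$ and $(\underline{H})_{k+1,k} = \mu_k (R)_{k+1,k+1}$; dividing and using $(R)_{k+1,k+1}\neq 0$ gives $(\underline{H})_{k+1,k}/(\underline{K})_{k+1,k} = \mu_k/\nu_k = \xi_k$, as claimed. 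The main obstacle is the bookkeeping at the degenerate cases of infinite poles or shifts ($\nu_k=0$ or $\eta_k=0$): I must check that the homogeneous parametrization keeps the two-term identity valid (it does, since clearing denominators never divides by the vanishing factor), that the ratio $\xi_k=\mu_k/\nu_k$ is read in $\overline{\mathbb{C}}$ so a pole at infinity is correctly flagged by a zero subdiagonal entry of $\underline{K}$, and that no breakdown occurs, guaranteeing $(R)_{k+1,k+1}\neq 0$ and that the subspace genuinely has dimension $n+1$.
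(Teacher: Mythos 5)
Your argument is correct and complete. Note that the paper itself offers no proof of this theorem; it is stated as a known result and deferred entirely to Ruhe's work \cite{27}, where the pencil relation is obtained algorithmically by writing out the Gram--Schmidt step $(\nu_k A-\mu_k I)^{-1}(\eta_k A-\rho_k I)\bold{q}_{k-1}=\sum_{j=0}^{k}h_{j+1,k}\bold{q}_j$ and clearing the resolvent. Your route is different in flavor and slightly more general: you work with the raw generating chain $\bold{v}_k=\bigl(\tfrac{\eta_k A-\rho_k I}{\nu_k A-\mu_k I}\bigr)\bold{v}_{k-1}$, extract the basis-free two-term identity $A(\nu_k\bold{v}_k-\eta_k\bold{v}_{k-1})=\mu_k\bold{v}_k-\rho_k\bold{v}_{k-1}$, and transport it through an upper triangular change of coordinates to \emph{any} nested orthonormal basis $Q_{n+1}$ --- which is exactly the hypothesis of the theorem as stated, not just the basis produced by a particular orthogonalization order. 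This buys you a clean existence proof of the pencil $(\underline{H},\underline{K})$ together with an immediate explanation of its non-uniqueness (the footnote in the paper about right-multiplication by a nonsingular upper triangular matrix corresponds to the freedom in choosing $R$), and your reading of the subdiagonal entries $(\underline{H})_{k+1,k}=\mu_k(R)_{k+1,k+1}$, $(\underline{K})_{k+1,k}=\nu_k(R)_{k+1,k+1}$ gives the pole ratio $\xi_k=\mu_k/\nu_k$ in $\overline{\mathbb{C}}$ with the infinite-pole case handled uniformly. The only hypotheses you rely on --- that the subspace has full dimension $n+1$ so that $R$ is nonsingular, and that the resolvents $(\nu_k A-\mu_k I)^{-1}$ exist, i.e.\ no pole coincides with an eigenvalue of $A$ --- are implicit in the theorem's assumption that $Q_{n+1}$ is a basis of $\mathcal{K}_{n+1}(A,\bold{v};\Xi,\Phi)$, and you flag them appropriately.
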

	 An orthonormal basis for a rational Krylov subspace can thus be generated by a recurrence relation that is captured by a Hessenberg recurrence pencil $(\underline{H}_n,\underline{K}_n)$ which is not unique\footnote{We should note that the pencil $(\underline{H}_n,\underline{K}_n)$ is not unique, i.e. a multiplication on the right with a non-singular upper triangular matrix is possible}. In order to make a connection between the orthonormal bases for rational Krylov subspaces and orthogonal rational functions the following theorem is presented.
	\begin{theorem}[Krylov induced orthogonal rational functions]\label{theorem:KrylovORFs}
		Consider the diagonal matrix $Z=diag(\{z_{j}\}_{j=1}^{m})\in \mathbb{C}^{m\times m}$ and the vector $
\bold{v}=\begin{bmatrix}
		w_1 & \dots & w_m
	\end{bmatrix}^\top\in\mathbb{C}^m
	$. Let $\begin{bmatrix}
			\bold{q}_0 & \bold{q}_1 & \dots & \bold{q}_{n}
		\end{bmatrix} = Q_{n+1} \in\mathbb{C}^{m\times (n+1)}$ form a nested orthonormal basis for $\mathcal{K}_{n+1}(A,\bold{v};\Xi,\Phi)$, $n<m$.
		Let rational functions $r_k\in\mathcal{R}^\Xi_k$ be defined on the spectrum of $Z$ satisfying $\bold{q}_k = r_k(Z)\bold{v}$.
		Then the sequence of rational functions $\{r_k\}_{k=0}^{n}$ consists of orthonormal rational functions with respect to an inner product of the form
		\begin{equation}\label{innerR}
			\langle r_k,r_h\rangle_m = \sum_{j=1}^{m} \vert w_j\vert^2  r_k(z_j)\overline{r_h(z_j)}.
		\end{equation}
	\end{theorem}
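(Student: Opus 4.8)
The plan is to mirror the argument behind Theorem~\ref{theorem:KrylovOPs}, now with rational functions in place of polynomials, exploiting that $Z$ is diagonal so that evaluating a rational function at $Z$ reduces to an entrywise evaluation. First I would observe that, since the poles in $\Xi$ are distinct from the nodes $\{z_j\}_{j=1}^m$ (this is exactly what ``defined on the spectrum of $Z$'' ensures), each $r_k$ is finite at every node and $r_k(Z) = \operatorname{diag}(r_k(z_1),\dots,r_k(z_m))$ is a well-defined diagonal matrix. The hypothesis $\mathbf{q}_k = r_k(Z)\mathbf{v}$ then reads componentwise as $(\mathbf{q}_k)_j = r_k(z_j)\,w_j$ for $j=1,\dots,m$.

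The core of the proof is a direct computation matching the Euclidean inner product of two basis vectors with the form \eqref{innerR}. Substituting the componentwise expressions and using $w_j\overline{w_j} = |w_j|^2$ gives
\begin{equation*}
\langle \mathbf{q}_k,\mathbf{q}_h\rangle_E = \sum_{j=1}^m (\mathbf{q}_k)_j\,\overline{(\mathbf{q}_h)_j} = \sum_{j=1}^m |w_j|^2\, r_k(z_j)\,\overline{r_h(z_j)} = \langle r_k,r_h\rangle_m.
\end{equation*}
Since $Q_{n+1}$ is orthonormal, the left-hand side equals $\delta_{kh}$, whence $\langle r_k,r_h\rangle_m = \delta_{kh}$, which is precisely the orthonormality asserted for $\{r_k\}_{k=0}^n$.

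It remains to confirm that \eqref{innerR} genuinely defines an inner product on $\mathcal{R}_n^\Xi$. Sesquilinearity and Hermitian symmetry are immediate from the formula, so the only point requiring an argument is positive definiteness, which amounts to the injectivity of the evaluation map $r \mapsto r(Z)\mathbf{v}$ on $\mathcal{R}_n^\Xi$ (assuming, as in Problem~\ref{prob:wCLS}, that the weights $w_j$ are nonzero). A nonzero $r \in \mathcal{R}_n^\Xi$ can be written as $r = p/\prod_{\xi_k\neq\infty}(t-\xi_k)$ with $p\in\mathcal{P}_n$ nonzero; since the denominator does not vanish at the nodes, $r(z_j)=0$ forces $p(z_j)=0$, and a nonzero $p$ of degree at most $n$ has at most $n<m$ roots, so $r$ cannot vanish at all $m$ distinct nodes. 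Hence $r(Z)\mathbf{v}\neq\mathbf{0}$ and the form is positive definite.

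I expect the genuine subtlety to lie only in this last nondegeneracy step, where the degree restriction $n<m$ and the distinctness of the nodes are actually used; everything else is a transcription of the diagonal computation already employed in the polynomial case, with the rational structure entering solely through the requirement that the poles avoid the spectrum of $Z$.
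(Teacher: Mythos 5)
Your argument is correct. Note, though, that the paper itself does not prove this theorem directly: its ``proof'' is a one-line citation of Theorem~7.1 in the thesis \cite{24}, so you cannot be matching it step for step. What you have written is the self-contained computation that the citation outsources, and it coincides (specialized to $s_j=0$, i.e.\ diagonal $Z$ rather than a Jordan-like $J$) with the detailed proof the paper does give for the Sobolev rational analogue, Theorem~\ref{theorem:KrylovSobOrs}: there too the key steps are that $r_k(J)$ is block-diagonal by the definition of a matrix function, that $\bold{q}_h^H\bold{q}_k=\bold{v}^H(r_h(J))^Hr_k(J)\bold{v}$, and that expanding this sum reproduces the discrete inner product. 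Your version buys a completely elementary, verifiable argument in place of an external reference, and your closing nondegeneracy check (that a nonzero element of $\mathcal{R}_n^\Xi$ cannot vanish at all $m>n$ distinct nodes, so the form is positive definite and the map $r\mapsto r(Z)\bold{v}$ is injective) is a genuine addition the paper leaves implicit; it is also where the hypotheses $n<m$, distinct nodes, and nonzero weights are actually consumed. The cited general theorem, by contrast, covers nondiagonal $A$ and more general shift/pole configurations, which your diagonal computation does not attempt --- but for the statement as given (diagonal $Z$), nothing more is needed.
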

	\begin{proof}
	This is a direct result of Theorem 7.1 in \cite{24}.
	\end{proof}
	We proceed the same way as in Section \ref{s2}. Once we get hold of the Hessenberg recurrence pencil $(\underline{H}_n,\underline{K}_n)$ via the rational Arnoldi, the set of orthogonal rational functions $\{r_k\}_{k=0}^{n}$ can be obtained through
	\begin{equation*}
		t \begin{bmatrix}
			r_0 & r_1 & \dots & r_{n}
		\end{bmatrix}\underline{K} = \begin{bmatrix}
		r_0 & r_1 & \dots & r_{n}
	\end{bmatrix} \underline{H}.
	\end{equation*}
	Therefore, accessing $\bold{y}=Q_{n+1}^H W\bold{f}$ makes it possible to compute the least squares solution $r(t)=\sum_{k=0}^{n} y_{k} r_{k}(t)$.\\
	
	The unknown coefficients of $r(t)$ can be obtained by Algorithm \ref{alg:5} as follows.
		\begin{algorithm}[H]
			\caption{The evaluation process of the unknown vector $\bold{y}$}\label{alg:5}
			\begin{algorithmic}[1]
				\State \textbf{Input:} $Z \in \mathbb{C}^{m\times m}$, $\Xi = \{\xi_1,\xi_2,\dots, \xi_{n}\}$, $\Phi=\{\phi_1,\phi_2,\dots, \phi_{n}\}$, $\bold{f} \in \mathbb{C}^{m}$, $\bold{v} \in \mathbb{C}^{m}$, the integer $n$
				\State \textbf{Output:} Hessenberg matrices $\underline{H},~\underline{K} \in\mathbb{C}^{(n+1)\times n}$, the unknown vector $\bold{y} \in \mathbb{C}^{n+1}$
				\Procedure{}{}
				\State $\bold{q}_0 = \bold{v}/ \Vert \bold{v}\Vert$\Comment{Rational Arnoldi iteration}
				\For{$k=1,2,\dots,n$}
				\State $\bold{q}_k = (\nu_k Z - \mu_k I)^{-1} (\eta_k Z - \rho_k I)\bold{q}_{k-1}$ \Comment{With $\frac{\mu_k}{\nu_k} = \xi_k$, and $\frac{\rho_k}{\eta_k} = \phi_k$}
				\For{$j=1,2,\dots, k$}\Comment{Orthogonalization}
				\State $h_{j,k} = \langle \bold{q}_k,\bold{q}_j\rangle_E$
				\State $\bold{q}_k = \bold{q}_k - h_{j,k} \bold{q}_j$
				\EndFor
				\State $h_{k+1,k} = \Vert \bold{q}_k\Vert$
				\State $\bold{q}_k = \bold{q}_k/h_{k+1,k}$ \Comment{Normalization}
				\EndFor
				\State $\underline{K} =  \underline{H}\textrm{diag}(\nu_1,\dots,\nu_n) - I_{n+1}\textrm{diag}(\eta_1,\dots,\eta_n)$\Comment{$I_{n+1}\in \mathbb{C}^{(n+1)\times n}$ is the identity matrix}
				\State $\underline{H} =  \underline{H} \textrm{diag}(\mu_1,\dots,\mu_n) - I_{n+1}\textrm{diag}(\rho_1,\dots,\rho_n)$
				\State $\bold{y} = Q_{n+1}^H \textrm{diag}(w_1,\dots,w_m)\bold{f}$\Comment{With $\bold{v}=\begin{bmatrix}
						w_1 & \dots & w_m
					\end{bmatrix}^\top\in\mathbb{C}^m$}
				\EndProcedure\end{algorithmic}
		\end{algorithm}	
	 Given the first rational function $r_{0}(t)=\frac{1}{\sqrt{w_{1}^{2}+...+w_{m}^{2}}}$ evaluated through the inner product \eqref{innerR}, the solution can be computed by Algorithm \ref{alg:6} at the set of sampling points $\{x_{j}\}_{j=1}^{M}$. In this algorithm, matrix $U_{n+1}$ contains the values of the orthonormal rational functions at sampling points
	 \begin{equation*}
	 	U_{n+1}:=	\begin{bmatrix}
	 		r_0(x_1)& r_1(x_1) & \dots & r_n(x_1)\\
	 		r_0(x_2)& r_1(x_2) & \dots & r_n(x_2)\\
	 		\vdots& \vdots &  & \vdots\\
	 		r_0(x_M)& r_1(x_M) & \dots & r_n(x_M)\\
	 	\end{bmatrix},
	 \end{equation*}	
	 satisfying
	 \begin{equation*}
	 	XU_{n+1}\underline{K} = U_{n+1}\underline{H}, \quad  \quad X =diag(\{x_{j}\}_{j=1}^{M})\in \mathbb{C}^{M\times M}.
	 \end{equation*}
 If $\bold{r}:=U_{n+1}\bold{y}$ is now computed, the result is a vector of order $M$ including $r(x_{j})$.
 	\begin{algorithm}[H]
 	\caption{The least squares solution $\bold{r}$}\label{alg:6}
 	\begin{algorithmic}[1]
 		\State \textbf{Input:} $\bold{y} \in \mathbb{C}^{n+1}$, $\underline{H},~ \underline{K}\in\mathbb{C}^{(n+1)\times n}$, $X \in \mathbb{C}^{M \times M}$, $r_{0}(t)$
 		\State \textbf{Output:} $U_{n+1} \in \mathbb{C}^{M\times (n+1)}$, $\bold{r} \in \mathbb{C}^{M}$
 		\Procedure{}{}
 		\State $\bold{u}_0 =\bold{r}_{0}$\Comment{With $\bold{r}_{0}$ as a $M$-vector of all $r_{0}(t)$}
 		\For{$k=1,2,\dots,n$}
 		\State $\bold{u}_k = 0$
 		\For{$j=1,2,\dots, k$}
 		\State $\bold{u}_k = \bold{u}_k + h_{j,k} \bold{u}_j - k_{j,k}X \bold{u}_{j}$
 		\EndFor
 		\State $\bold{u}_k =(k_{k+1,k} X-h_{k+1,k}I_{M})^{-1} \bold{u}_k$ \Comment{$I_{M} \in \mathbb{C}^{M \times M}$ is the identity matrix}
 		\EndFor
 		\State $\bold{r} = U_{n+1}\bold{y} $\Comment{The least squares solution}
 		\EndProcedure\end{algorithmic}
 \end{algorithm}	 	
\subsection{Displacement rank and rational Krylov subspaces}
The rational Krylov basis $\{\bold{v},\psi_{1}(Z)\bold{v},\ldots,\psi_{n}(Z)\bold{v}\}$ associated with a rational Krylov subspace $\mathcal{K}_{n+1}(Z,\bold{v};\Xi,\Phi)$ forms the rational Krylov matrix
\begin{equation*}
 		B_{n+1} =\begin{bmatrix}
 			\bold{v} & \psi_{1}(Z)\bold{v} &\dots & \psi_{n}(Z)\bold{v}
 		\end{bmatrix} =W \underbrace{\begin{bmatrix}
			1&\frac{1}{z_1-\xi_1} & \frac{1}{z_1-\xi_2} & \dots & \frac{1}{z_1-\xi_n}\\
			1&\frac{1}{z_2-\xi_1} & \frac{1}{z_2-\xi_2} & \dots & \frac{1}{z_2-\xi_n}\\
			\vdots & \vdots & & \vdots \\
			1&\frac{1}{z_m-\xi_1} & \frac{1}{z_m-\xi_2} & \dots & \frac{1}{z_m-\xi_n}\\
		\end{bmatrix},}_{:=C\in\mathbb{C}^{m\times (n+1)}}
 	\end{equation*}
where $\psi_{i}(Z)=\left(\frac{ I}{Z-\xi_1 I}\right)\prod_{k=1}^{i-1}\left( \frac{Z-\xi_k I}{Z-\xi_{k+1} I} \right)$, $i=1,2,\ldots,n$.

The matrix $C$ exhibits low displacement rank \cite{30}, i.e. for the specific displacement operator based on $Z=diag(\{z_{j}\}_{j=1}^{m})\in \mathbb{C}^{m\times m}$ and $\Psi=diag(\{1,\xi_{1},\xi_{2},\ldots,\xi_{n}\})$, we have
\begin{equation*}
		ZC - C \Psi = \begin{bmatrix}
			z_{1}-1 & 1 & \ldots &1\\
			z_{2}-1 & 1 & \ldots & 1\\
			\vdots & \vdots &  & \vdots\\
			z_{m}-1 & 1 & \ldots & 1
			\end{bmatrix},
	\end{equation*}
	which concludes the matrix $C$ to have a displacement rank of $2$.

Now, we reveal that any rational Krylov matrix associated with a rational Krylov subspace generated by a diagonalizable matrix is related to the matrix $C$ and vice versa. By taking the same approach as Section \ref{2.4}, inserting the eigenvalue decomposition $A= X Z X^{-1}$ in $B_{n+1}$ and assuming $\bold{w}:=X^{-1}\bold{v}$ result in
	\begin{align*}
		B_{n+1} &= \begin{bmatrix}
			\bold{v} & \psi_{1}(A)\bold{v} & \dots &  \psi_{n}(A)\bold{v}\\
		\end{bmatrix}
		= \begin{bmatrix}
			XX^{-1}\bold{v} & \psi_{1}(X Z X^{-1}) \bold{v} & \dots & \psi_{n}(X Z X^{-1})\bold{v}\\
		\end{bmatrix}\\
		&= X\begin{bmatrix}
			\bold{w} & \psi_{1}(Z)\bold{w} & \dots & \psi_{n}(Z) \bold{w}\\
		\end{bmatrix}= X \underbrace{\begin{bmatrix}
				w_1 \\
				& w_2\\
				& & \ddots \\
				& & & w_m
		\end{bmatrix} }_{=:W}
		\underbrace{\begin{bmatrix}
				1 & \psi_{1}(z_1) & \dots & \psi_{n}(z_1)\\
				1 & \psi_{1}(z_2) & \dots & \psi_{n}(z_2)\\
				\vdots & \vdots &  & \vdots\\
				1 & \psi_{1}(z_m) & \dots & \psi_{n}(z_m)\\
		\end{bmatrix}}_{=:C},
	\end{align*}
	and obviously $\textrm{rank}(A B_{n+1} -B_{n+1} \Psi)=2$. Indeed, we can claim that any pair of matrices $A,B_{n+1}$ satisfying this relation are suitable candidates to apply the rational Arnoldi iteration. Moreover, we can utilize the Jordan canonical form $A = X J X^{-1}$ for defective matrices, which is related to the Sobolev rational least squares problem.
	\section{Weighted Sobolev rational least squares problem}\label{s5}
	This section is devoted to the rational Krylov subspace corresponding to the Sobolev rational least squares problem. The related rational Krylov subspace, which involves a Jordan-like matrix, is investigated as well.
	\begin{problem}[Weighted Sobolev rational least squares]\label{prob:wHrLS}
		Suppose that $\{z_j,w_{j}\}_{j=1}^\sigma$ are the given nodes and corresponding weights, respectively. Given the sets of poles $\Xi$ and shifts $\Phi$, and the first $s_{j}$ derivative values $\{f^{(s_{j})}_{j}\}_{j=1}^\sigma$, $s_{j}\leq s$, with $s \geq 0$, construct a rational function $r\in\mathcal{R}_n^\Xi$, $n\leq m-1$, minimizing
		\begin{equation*}
			\sum_{j=1}^\sigma  \sum_{i=1}^{s_{j}}  \vert w_j\vert^2 \left\vert \frac{\prod_{r=1}^{i}\alpha_{r}^{(j)}}{i!} \right\vert^2\vert r^{(i)}(z_j)-f^{(i)}_{j}\vert^2,
		\end{equation*}
	in which $\alpha_{r}^{(j)}$ are non-zero and chosen freely\footnote{$\alpha_{r}^{(j)}$ will appear in the Jordan blocks.}.
	\end{problem}
	Again, we first look at the corresponding rational Krylov subspace for an arbitrary $s$. Let us define the matrix $\mathcal{G}_{j} \in \mathbb{C}^{(s_{j}+1)\times (n+1)}$, $j=1,2,\dots,\sigma$, containing the derivatives of $\{1,\frac{1}{t-\xi_1},\ldots, \frac{1}{t-\xi_n}\}$ up to order $s_{j}$ at $z_{j}$, in the following sense
	\begin{equation*}
	\begin{bmatrix}
	0 & \frac{(-1)^{s_{j}} s_{j}!}{(z_{j}-\xi_{1})^{s_{j}+1}} &\frac{(-1)^{s_{j}}s_{j}!}{(z_{j}-\xi_{2})^{s_{j}+1}}& \dots & \frac{(-1)^{s_{j}}s_{j}!}{(z_{j}-\xi_{n})^{s_{j}+1}}\\
				0 & \frac{(-1)^{s_{j}-1}(s_{j}-1)!}{(z_{j}-\xi_{1})^{s_{j}}} &\frac{(-1)^{s_{j}-1}(s_{j}-1)!}{(z_{j}-\xi_{2})^{s_{j}}}& \dots & \frac{(-1)^{s_{j}-1}(s_{j}-1)!}{(z_{j}-\xi_{n})^{s_{j}}}\\
		\vdots & \vdots & \vdots & & \vdots \\
		0 & \frac{-1}{(z_{j}-\xi_{1})^{2}} &\frac{-1}{(z_{j}-\xi_{2})^{2}}& \dots & \frac{-1}{(z_{j}-\xi_{n})^{2}}\\
				1 & \frac{1}{(z_{j}-\xi_{1})} &\frac{1}{(z_{j}-\xi_{2})}& \dots & \frac{1}{(z_{j}-\xi_{n})}
\end{bmatrix}.
\end{equation*}
By considering $m:=\sigma+\sum_{j=1}^\sigma s_{j}$, and applying the basis $\{1,\frac{1}{t-\xi_1},\ldots, \frac{1}{t-\xi_n}\}$, Problem \ref{prob:wHLS} is equivalent to the following system of equations
\begin{equation*}
	W\underbrace{\left[\begin{array}{ccccc}
			&&\mathcal{G}_{1}&&\\
			&&\mathcal{G}_{2}&&\\
			&&\vdots&&\\
			&&\mathcal{G}_{\sigma}&&\\
		\end{array}\right]}_{=:C^{(c)}\in \mathbb{C}^{m\times (n+1)}} \begin{bmatrix}
		c_0\\
		c_1\\
		\vdots\\
		c_{n}
	\end{bmatrix} \approx W \bold{f},
\end{equation*}
where $W=diag(\bold{w})$, with $\bold{w}$ defined in \eqref{eq26}, and
\begin{equation*}
\bold{f}=\begin{bmatrix}
				f^{(s_{1})}_1~
				f^{(s_{1}-1)}_{1}~
			\dots~
			f_{1}&
			f^{(s_{2})}_2~
			f^{(s_{2}-1)}_{2}~
			\dots~
			f_{2}&
			\dots&
			f^{(s_{\sigma})}_\sigma~
			f^{(s_{\sigma}-1)}_{\sigma}~
			\dots~
			f_\sigma
		\end{bmatrix}^\top \in \mathbb{C}^{m}.
		\end{equation*}
The coefficient matrix $WC^{(c)}$ is equal to the Krylov matrix of the rational Krylov subspace $\mathcal{K}_{n+1}(J,\bold{v};\Xi,\Phi)$ defined by
\begin{equation*}
 		\operatorname{span}\left\{
 			\bold{v} , \frac{ I}{J-\xi_1 I} \bold{v} ,\dots , \left(\frac{ I}{J-\xi_1 I}\right)\prod_{k=1}^{n-1}\left( \frac{J-\xi_k I}{J-\xi_{k+1} I} \right)\bold{v}\right\}.
 	\end{equation*}
Meanwhile, the starting vector is
 \begin{equation*}
	\bold{v}=\begin{bmatrix}      \overbrace{0 \ldots  0}^{s_{1}} & w_{1} &   \overbrace{0 \ldots  0}^{s_{2}} & w_{2} & \dots& \overbrace{0 \ldots  0}^{s_{\sigma}} & w_{\sigma} \end{bmatrix}^\top \in \mathbb{C}^{m},
		\end{equation*}
		and the Jordan-like matrix
	\begin{equation*}
	 J = \left[\begin{array}{cccc}
		J_{s_{1}} &&&\\
		 &J_{s_{2}}&&\\
		 &&\ddots&\\
		&&&J_{s_{\sigma}}\\
	\end{array}\right]\in \mathbb{C}^{m\times m},
	\end{equation*}
	contains $\sigma$ blocks of order $s_{j}+1$ as shown in \eqref{eq25}.
		
		  Notably, if we remove the first column of $C^{(c)}$, it is a confluent Cauchy matrix, and we can take the same strategy.

We dedicate the next subsection to the orthogonality of the Sobolev orthogonal rational functions with respect to a Sobolev inner product. Two algorithms are also given to compute the solution of Problem \ref{prob:wHrLS}.
		\subsection{Rational Arnoldi and Sobolev orthogonal rational functions}\label{sec:Arnoldi_Sobolev}
		To compute an orthonormal basis $Q_{n+1}$ for $\mathcal{K}_{n+1}(J,\bold{v};\Xi,\Phi)$, the rational Arnoldi procedure is utilized. This iteration also gives a Hessenberg pencil containing the recurrence coefficients.
		\begin{theorem}[Krylov induced Sobolev orthogonal rational functions]\label{theorem:KrylovSobOrs}
		Consider the Jordan-like matrix
		\begin{equation*}
	 J = \left[\begin{array}{cccc}
		J_{s_{1}} &&&\\
		 &J_{s_{2}}&&\\
		 &&\ddots&\\
		&&&J_{s_{\sigma}}\\
	\end{array}\right]\in \mathbb{C}^{m\times m},
	\end{equation*}
	with blocks
	\begin{equation*}
		J_{s_{j}} = \begin{bmatrix}
			z_{j}&\alpha_{s_{j}}^{(j)}&&\\
			&z_{j}&&\\
			&&\ddots&\alpha_{1}^{(j)}\\
			&&&z_{j}\\
			
		\end{bmatrix}, \quad j=1,2,\ldots,\sigma,
	\end{equation*}
	and the vector
		\begin{equation*}
	\bold{v}=\begin{bmatrix}      \overbrace{0 \ldots  0}^{s_{1}} & w_{1} &   \overbrace{0 \ldots  0}^{s_{2}} & w_{2} & \dots& \overbrace{0 \ldots  0}^{s_{\sigma}} & w_{\sigma} \end{bmatrix}^\top \in \mathbb{C}^{m}.
		\end{equation*}
Let $\begin{bmatrix}
			\bold{q}_0 & \bold{q}_1 & \dots & \bold{q}_{n}
		\end{bmatrix} = Q_{n+1} \in\mathbb{C}^{m\times (n+1)}$ form a nested orthonormal basis for $\mathcal{K}_{n+1}(J,\bold{v};\Xi,\Phi)$, $n<m$.
		Assume that rational functions $r_k\in\mathcal{R}^\Xi_k$ are defined on the spectrum of $J$ satisfying $\bold{q}_k = r_k(J)\bold{v}$.
		Then $\{r_k\}_{k=0}^{n}$ is the set of Sobolev orthonormal rational functions with respect to the inner product
		\begin{equation}\label{innerRS}
			\langle r_k,r_h\rangle _S = \sum_{j=1}^\sigma  \sum_{i=1}^{s_{j}} \vert w_j\vert^2 \left\vert \frac{\prod_{r=1}^{i}\alpha_{r}^{(j)}}{i!} \right\vert^2 r^{(i)}_{k}(z_j) \overline{r^{(i)}_{h}(z_j)}.
		\end{equation}
	\end{theorem}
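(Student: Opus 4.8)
The plan is to reduce the statement to the Euclidean orthonormality of the Arnoldi basis $Q_{n+1}$, mirroring the proofs of Theorem \ref{theorem:KrylovSobOPs} and Theorem \ref{theorem:KrylovORFs} and merging their two mechanisms: the functional calculus on the spectrum that underlies the rational case, and the Jordan-block evaluation that manufactures derivative values in the Sobolev case. First I would record two well-definedness facts. Each $r_k\in\mathcal{R}_k^\Xi$ has its finite poles among $\{\xi_\ell\}$, which by the standing assumption are disjoint from the spectrum $\{z_j\}$ of $J$; hence $r_k$ is analytic at every eigenvalue of $J$ and the matrix function $r_k(J)$ is well defined. Moreover, provided the rational Arnoldi iteration (Theorem \ref{theorem:RAI}) runs without breakdown, the space $\mathcal{K}_{n+1}(J,\bold{v};\Xi,\Phi)$ has dimension $n+1$ for $n<m$, so the linear map $r\mapsto r(J)\bold{v}$ is a bijection from $\mathcal{R}_n^\Xi$ onto this Krylov subspace and each $r_k$ is uniquely determined by $\bold{q}_k$.

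The central step is a blockwise evaluation of $r_k(J)\bold{v}$. Writing $J_{s_j}=z_jI+N_j$ with $N_j$ the nilpotent superdiagonal part of \eqref{eq25}, the holomorphic functional calculus gives $r_k(J_{s_j})=\sum_{i=0}^{s_j}\frac{r_k^{(i)}(z_j)}{i!}\,N_j^{\,i}$. Since the starting vector $\bold{v}$ places its single nonzero entry $w_j$ in the last slot of the $j$-th block, and since $N_j^{\,i}$ applied to that slot shifts it upward by $i$ positions while multiplying by $\prod_{r=1}^{i}\alpha_r^{(j)}$, the restriction of $\bold{q}_k=r_k(J)\bold{v}$ to block $j$ carries, in its $i$-th slot, exactly the entry $\frac{\prod_{r=1}^{i}\alpha_r^{(j)}}{i!}\,w_j\,r_k^{(i)}(z_j)$. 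This reproduces precisely the weighting collected in $\bold{w}$ of \eqref{eq26}.

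With this coordinatewise description, expanding the Euclidean inner product block by block and slot by slot yields
\begin{equation*}
\langle \bold{q}_k,\bold{q}_h\rangle_E=\bold{v}^H r_h(J)^H r_k(J)\bold{v}=\sum_{j=1}^{\sigma}\sum_{i=0}^{s_j}|w_j|^2\left|\frac{\prod_{r=1}^{i}\alpha_r^{(j)}}{i!}\right|^2 r_k^{(i)}(z_j)\,\overline{r_h^{(i)}(z_j)},
\end{equation*}
which is exactly the Sobolev inner product $\langle r_k,r_h\rangle_S$ of \eqref{innerRS}, the $i=0$ summand accounting for the function values $r_k(z_j)\overline{r_h(z_j)}$. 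Because $Q_{n+1}$ is orthonormal we have $\langle \bold{q}_k,\bold{q}_h\rangle_E=\delta_{kh}$, and the identity above then gives $\langle r_k,r_h\rangle_S=\delta_{kh}$; since the basis is nested, $r_k\in\mathcal{R}_k^\Xi$, so $\{r_k\}_{k=0}^{n}$ is the claimed Sobolev orthonormal sequence.

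I expect the main obstacle to be the blockwise computation: confirming that the functional-calculus expansion $r_k(J_{s_j})=\sum_i \frac{r_k^{(i)}(z_j)}{i!}N_j^{\,i}$ is valid with the nonstandard superdiagonal weights, and that the bookkeeping of $N_j^{\,i}\bold{v}$ produces exactly the products $\prod_{r=1}^{i}\alpha_r^{(j)}$ together with the factorial normalization $1/i!$, so that the entries line up with \eqref{eq26}. A secondary point, ensuring $\langle\cdot,\cdot\rangle_S$ is a genuine (nondegenerate) inner product on $\mathcal{R}_n^\Xi$ rather than merely a sesquilinear form, is the injectivity of $r\mapsto r(J)\bold{v}$, which again rests on $n<m$ and the absence of breakdown in the rational Arnoldi process.
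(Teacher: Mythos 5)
Your proposal is correct and follows essentially the same route as the paper: both reduce orthonormality in the Sobolev inner product to the Euclidean orthonormality $\delta_{kh}=\bold{q}_h^H\bold{q}_k=\bold{v}^H r_h(J)^H r_k(J)\bold{v}$ and then evaluate $r_k(J)$ block by block via the matrix-function definition on the Jordan-like blocks, which is exactly your expansion $r_k(J_{s_j})=\sum_{i}\frac{r_k^{(i)}(z_j)}{i!}N_j^{\,i}$. The only difference is that you additionally spell out well-definedness of $r_k(J)$ and the injectivity of $r\mapsto r(J)\bold{v}$, points the paper leaves implicit.
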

	\begin{proof}
The basis vectors satisfy $\bold{q}_k = r_k(J)\bold{v}$, with $r_k\in\mathcal{R}^\Xi_k$.
The orthogonality of the columns of $Q_{n+1}$ implies
\begin{equation}\label{eq5}
	\delta_{kh} = \bold{q}_h^H \bold{q}_k =  \bold{v}^H (r_h(J))^H r_k(J)\bold{v}.
\end{equation}	
Since $r_{k}$ is defined on the spectrum of $J$, by the definition of a matrix function \cite{15}, we know that
\begin{equation}\label{eq9}
	r_k(J) = \left[\begin{array}{cccc}
		r_k(J_{s_{1}}) & &&\\
		& r_k(J_{s_{2}})&&\\
		& & \ddots& \\
		& & & r_k(J_{s_{\sigma}})
	\end{array}\right],
\end{equation}
where
\begin{equation*}
	r_k(J_{s_{j}}) = \begin{bmatrix}
		r_k(z_{j}) & \alpha_{s_{j}}^{(j)}r'_{k}(z_{j})&\dots& \frac{\prod_{r=1}^{s_{j}}\alpha_{r}^{(j)}}{s_{j}!}r^{(s_{j})}_{k}(z_{j})\\
		& r_k(z_{j}) &&\\
		& & \ddots& \alpha_{1}^{(j)} r'_{k}(z_{j})\\
		& & & r_k(z_{j}) \\
	\end{bmatrix}, \quad j=1,2,\ldots,\sigma.
\end{equation*}
Plugging \eqref{eq9} into \eqref{eq5} results in
\begin{equation*}
	\delta_{kh}=\sum_{j=1}^\sigma \sum_{i=0}^{s_{j}} \vert w_j\vert^2 \left\vert \frac{\prod_{r=1}^{i}\alpha_{r}^{(j)}}{i!} \right\vert^2 r^{(i)}_{k}(z_j) \overline{r^{(i)}_{h}(z_j)}.
\end{equation*}	
Thus the rational functions $\{r_k\}_{k=0}^{n}$ are orthonormal with respect to a Sobolev inner product.
		\end{proof}
		Now, the solution $r(t)$ to Problem \ref{prob:wHLS} is subject to find the unknown vector $\bold{y}=\begin{bmatrix}
		y_0 &  \dots & y_n
	\end{bmatrix}^{\top}$
	which is the output of Algorithm \ref{alg:7}.
		\begin{algorithm}[H]
		\caption{The evaluation process of the unknown vector $\bold{y}$}\label{alg:7}
		\begin{algorithmic}[1]
			\State \textbf{Input:} $J \in \mathbb{C}^{m\times m}$, $\Xi = \{\xi_1,\xi_2,\dots, \xi_{n}\}$, $\Phi=\{\phi_1,\phi_2,\dots, \phi_{n}\}$, $\bold{f} \in \mathbb{C}^{m}$, $W \in \mathbb{C}^{m \times m}$, $\bold{v} \in \mathbb{C}^{m}$, the integer $n$
			\State \textbf{Output:} Hessenberg matrices $\underline{H},~\underline{K} \in\mathbb{C}^{(n+1)\times n}$, the unknown vector $\bold{y} \in \mathbb{C}^{n+1}$
			\Procedure{}{}
			\State $\bold{q}_0 = \bold{v}/ \Vert \bold{v}\Vert$\Comment{Rational Arnoldi iteration}
			\For{$k=1,2,\dots,n$}
			\State $\bold{q}_k = (\nu_k J - \mu_k I)^{-1} (\eta_k J - \rho_k I)\bold{q}_{k-1}$ \Comment{With $\frac{\mu_k}{\nu_k} = \xi_k$, and $\frac{\rho_k}{\eta_k} = \phi_k$}
			\For{$j=1,2,\dots, k$}\Comment{Orthogonalization}
			\State $h_{j,k} = \langle \bold{q}_k,\bold{q}_j\rangle_E$
			\State $\bold{q}_k = \bold{q}_k - h_{j,k} \bold{q}_j$
			\EndFor
			\State $h_{k+1,k} = \Vert \bold{q}_k\Vert$
			\State $\bold{q}_k = \bold{q}_k/h_{k+1,k}$ \Comment{Normalization}
			\EndFor
			\State $\underline{K} =  \underline{H}\textrm{diag}(\nu_1,\dots,\nu_n) - I_{n+1}\textrm{diag}(\eta_1,\dots,\eta_n)$\Comment{$I_{n+1}\in \mathbb{C}^{(n+1)\times n}$ is the identity matrix}
			\State $\underline{H} =  \underline{H} \textrm{diag}(\mu_1,\dots,\mu_n) - I_{n+1}\textrm{diag}(\rho_1,\dots,\rho_n)$
			\State $\bold{y} = Q_{n+1}^H W\bold{f}$
			\EndProcedure\end{algorithmic}
	\end{algorithm}	
	At this stage, in order to compute the least squares solution at a given set of sampling points $\{x_j\}_{j=1}^{\tau}$ and its derivatives of order at most $s$, the following algorithm is provided. Notably, in this algorithm, $\bold{S}$ is the column vector of sampling derivatives $\{\tilde{s}_{j}\}_{j=1}^{\tau}$, $\tilde{s}_{j} \leq s$, and $X$ is the Jordan-like matrix associated with the sampling nodes $\{x_{j}\}_{j=1}^{\tau}$. We define the matrix $\mathcal{F}_{j} \in \mathbb{C}^{(\tilde{s}_{j}+1)\times (n+1)}$, $j=1,2,\ldots,\tau$, including the derivatives up to order $\tilde{s}_{j}$ of the Sobolev orthonormal rational functions at $x_{j}$, as
		\begin{equation*}
	\mathcal{F}_{j}:=	\begin{bmatrix}
			0 & r_{1}^{(\tilde{s}_{j})}(x_{j}) & \dots & r_{n}^{(\tilde{s}_{j})}(x_{j})\\
			\vdots & \vdots & & \vdots \\
			0 & r_{1}^{\prime}(x_{j}) & \dots & r_{n}^{\prime}(x_{j})\\
			r_{0}(x_{j}) & r_{1}(x_{j}) & \dots & r_{n}(x_{j})
		\end{bmatrix}.
	\end{equation*}
In Algorithm \ref{alg:8}, the resulting matrix $U_{n+1} \in \mathbb{C}^{M\times (n+1)}$, $M:=\tau+\sum_{j=1}^\tau \tilde{s}_{j}$ is
\begin{equation*}
	U_{n+1}:=\left[\begin{array}{ccccc}
		&&\mathcal{F}_{1}&&\\
		&&\mathcal{F}_{2}&&\\
		&&\vdots&&\\
		&&\mathcal{F}_{\tau}&&\\
	\end{array}\right],
\end{equation*}
		satisfying
		\begin{equation*}
			XU_{n+1}\underline{K} = U_{n+1}\underline{H}.
		\end{equation*}
	Finally, the $M$-vector $\bold{r}:= U_{n+1}\bold{y}$
		\begin{equation*}
		\bold{r}=\begin{bmatrix}     r^{(\tilde{s}_{1})}(x_{1}) \ldots  r^{\prime}(x_{1})& r(x_{1}) &   r^{(\tilde{s}_{2})}(x_{2}) \ldots  r^{\prime}(x_{2})& r(x_{2}) &  & \dots&
				r^{(\tilde{s}_{\tau})}(x_{\tau}) \ldots  r^{\prime}(x_{\tau})& r(x_{\tau})\end{bmatrix}^\top ,
	\end{equation*}
	can be computed which includes the approximations. Moreover, $r_{0}(t)=\frac{1}{\sqrt{w_{1}^{2}+...+w_{m}^{2}}}$ is the Sobolev orthonormal rational function of degree zero directly computed through the inner product \eqref{innerRS}.
	\begin{algorithm}[H]
		\caption{The least squares solution $\bold{r}$}\label{alg:8}
		\begin{algorithmic}[1]
			\State \textbf{Input:} $\bold{y} \in \mathbb{C}^{n+1}$, $\underline{H},~ \underline{K}\in\mathbb{C}^{(n+1)\times n}$, $X \in \mathbb{C}^{M \times M}$, $\tau$-vector $\bold{S}$, $r_{0}(t)$
			\State \textbf{Output:} $U_{n+1} \in \mathbb{C}^{M\times (n+1)}$, $\bold{r} \in \mathbb{C}^{M}$
			\Procedure{}{}
			\State $\bold{u}_0 = \begin{bmatrix}
					\mathcal{F}_{1,1}\\
					\mathcal{F}_{2,1}\\
				\vdots\\
					\mathcal{F}_{\tau,1}
			\end{bmatrix}$\Comment{	$\mathcal{F}_{j,1}$ is the first column of the matrix $\mathcal{F}_{j}$, for $j=1,2,\ldots,\tau$}
			\For{$k=1,2,\dots,n$}
			\State $\bold{u}_k = 0$
			\For{$j=1,2,\dots, k$}
			\State $\bold{u}_k = \bold{u}_k + k_{j,k}X \bold{u}_{j}-h_{j,k} \bold{u}_j$
			\EndFor
			\State $\bold{u}_k =(h_{k+1,k}I_{M}-k_{k+1,k}X)^{-1} \bold{u}_k$\Comment{$I_{M}\in \mathbb{C}^{M\times M}$ is the identity matrix}
			\EndFor
			\State $\bold{r} = U_{n+1} \bold{y}$\Comment{The least squares solution}
			\EndProcedure\end{algorithmic}
	\end{algorithm}	
	\section{Numerical examples}	\label{s6}
	In this section, to confirm the accuracy of all presented approaches, the numerical results of various examples are computed. For this purpose, we organize this section as follows:
\begin{description}
\item[$\bullet$]
  The numerical errors are reported to monitor the computational performance of the suggested methods. In all of the examples, the errors are calculated through the $L^{\infty}$-norm over an arbitrary set of sampling nodes $\{x_{j}\}_{j=1}^{M}$.

  \item[$\bullet$]
  We execute a code for the Arnoldi and rational Arnoldi algorithm with re-orthogonalization, i.e., the algorithm where the orthogonalization of a current vector against previously computed set is performed exactly twice, which is called "twice is enough" re-orthogonalization algorithm and introduced by Kahan and Parlett \cite{Parlett}.

\item[$\bullet$] To approve the superiority of the approach of  Arnoldi and rational Arnoldi, we compare our results with those obtained without Arnoldi.

\item[$\bullet$]  To illustrate the stability of the approach, the performance of the strategies is investigated for large $n$.
\end{description}
The process of calculations is done via Matlab R2022a, running on a computer system with an AMD Ryzen 7 pro 6850u CPU with 16 GiB memory.
\begin{example}\label{ex2}
\cite{10,11} Let $f(t) = \frac{1}{1+25t^{2}}$ on $[-1,1]$, and consider a Sobolev polynomial least squares problem, where the information available are $\{f_j^{(s_{j})}\}_{j=1}^{\sigma}$, $s_{j} \leq 2$, and $s_{j}$ are chosen randomly.
\end{example}
The problem is solved implementing the proposed method using $\sigma=2n+1$ first kind Chebyshev-Gauss quadrature nodes, and Legendre-Gauss quadrature nodes along with corresponding weights as the node-weight pairs $\{z_{j},w_{j}\}_{j=1}^{\sigma}$. Table \ref{tab2} reports the obtained numerical results for both $f$ and its derivatives.
\begin{table}[!h]
\begin{center}
\small{
\caption{The errors of Example \ref{ex2} using Arnoldi for different pairs of node-weight and $n$.\label{tab2}}\vspace{-0.2cm}
\setlength\tabcolsep{3.5pt}
\begin{tabular}{@{}c|cc|cc|cc@{}}
\hline
\hline
\multicolumn{3}{c|}{$ ||f-p||_{\infty}$} &\multicolumn{2}{c|}{$||f'-p'||_{\infty}$} &\multicolumn{2}{c}{$||f''-p''||_{\infty}$}\\
  [0.1 cm]
\cline{1-7}\\[-0.3 cm]
 $n$&  $\textit{Chebyshev} $ & $\textit{Legendre}$&$\textit{Chebyshev} $ & $\textit{Legendre}  $&$\textit{Chebyshev} $ & $\textit{Legendre} $\\
 \hline
 \hline
 30 &$9.31\times 10^{-2}$&  $8.54\times 10^{-2}$&$-$ & $-$ &$-$ & $-$\\
 60  &$3.02 \times 10^{-4}$& $3.35\times 10^{-4}$ & $5.40\times 10^{-3}$&$6.30 \times 10^{-3}$& $3.32\times 10^{-1}$ & $-$ \\
120&$1.81\times 10^{-9}$& $1.68\times 10^{-9}$ & $8.88\times 10^{-8}$&$2.84\times 10^{-7}$& $5.90\times 10^{-6}$ & $8.57\times 10^{-4}$\\
240 &$6.07 \times 10^{-14}$& $2.46\times 10^{-14}$ & $2.99\times 10^{-12}$&$1.70 \times 10^{-12}$& $3.48\times 10^{-9}$ & $1.50\times 10^{-8}$\\
\hline
\hline
\end{tabular}}\end{center}\vspace{-0.5cm}
\end{table}

The problem is also solved via the Arnoldi iteration with re-orthogonalization, and the errors are presented in Table \ref{tab10}. One can be deduced from Table \ref{tab2} and \ref{tab10} that the re-orthogonalization process increases the accuracy by one or two digits.
\begin{table}[!h]
	\begin{center}
		\small{
			\caption{The errors of Example \ref{ex2} using Arnoldi with re-orthogonalization for different pairs of node-weight and $n$.\label{tab10}}\vspace{-0.2cm}
			\setlength\tabcolsep{3.5pt}
			\begin{tabular}{@{}c|cc|cc|cc@{}}
				\hline
				\hline
				\multicolumn{3}{c|}{$ ||f-p||_{\infty}$} &\multicolumn{2}{c|}{$||f'-p'||_{\infty}$} &\multicolumn{2}{c}{$||f''-p''||_{\infty}$}\\
				[0.1 cm]
				\cline{1-7}\\[-0.3 cm]
				$n$&  $\textit{Chebyshev} $ & $\textit{Legendre}$&$\textit{Chebyshev} $ & $\textit{Legendre}  $&$\textit{Chebyshev} $ & $\textit{Legendre} $\\
				\hline
				\hline
				30 &$2.90\times 10^{-3}$&  $3.95\times 10^{-2}$&$3.37\times 10^{-1}$& $6.12\times 10^{-1}$ &$-$ & $-$\\
				60  &$7.30\times 10^{-5}$& $7.87\times 10^{-5}$ & $2.00\times 10^{-3}$&$2.20 \times 10^{-3}$& $1.06\times 10^{-1}$ & $3.42\times 10^{-1}$ \\
				120&$7.08\times 10^{-10}$& $1.34\times 10^{-9}$ & $2.79\times 10^{-8}$&$4.57\times 10^{-8}$& $2.79\times 10^{-8}$ & $1.66\times 10^{-5}$\\
				240 &$2.55 \times 10^{-15}$& $2.00\times 10^{-15}$ & $1.91\times 10^{-14}$&$2.86 \times 10^{-13}$& $1.28\times 10^{-10}$ & $4.59\times 10^{-9}$\\
				\hline
				\hline
	\end{tabular}}\end{center}\vspace{-0.5cm}
\end{table}

Undeniably, the results given in Table \ref{tab10} indicate the high accuracy of the presented scheme due to the regular decrease in the errors, in particular for large $n$.
The semi-log representation of the numerical errors in the case of first kind Chebyshev nodes and re-orthogonalization process is also depicted in Figure \ref{fg2}.
\begin{figure}[!ht]
\centerline{\includegraphics[width=8.3cm]{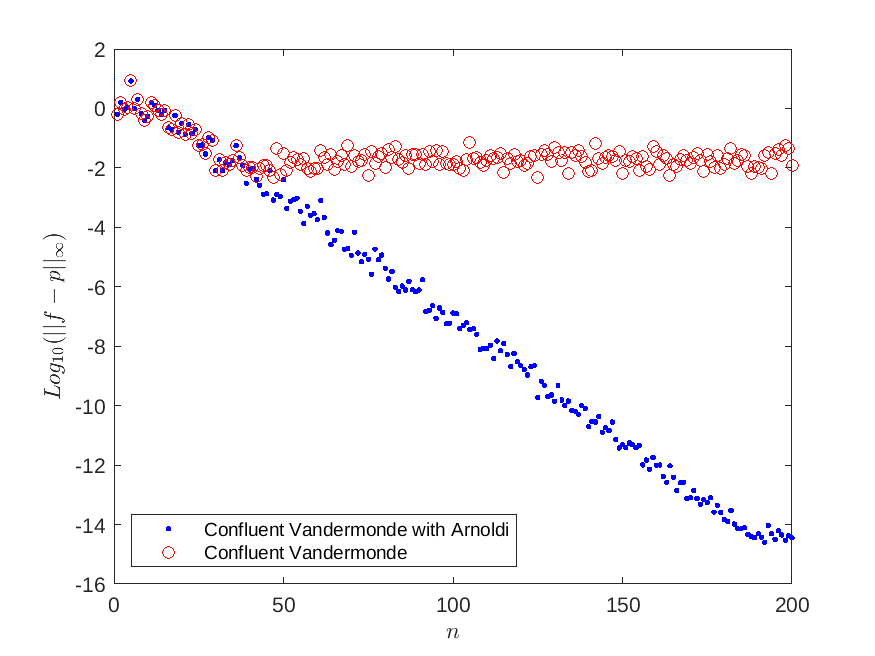},\includegraphics[width=8.3cm]{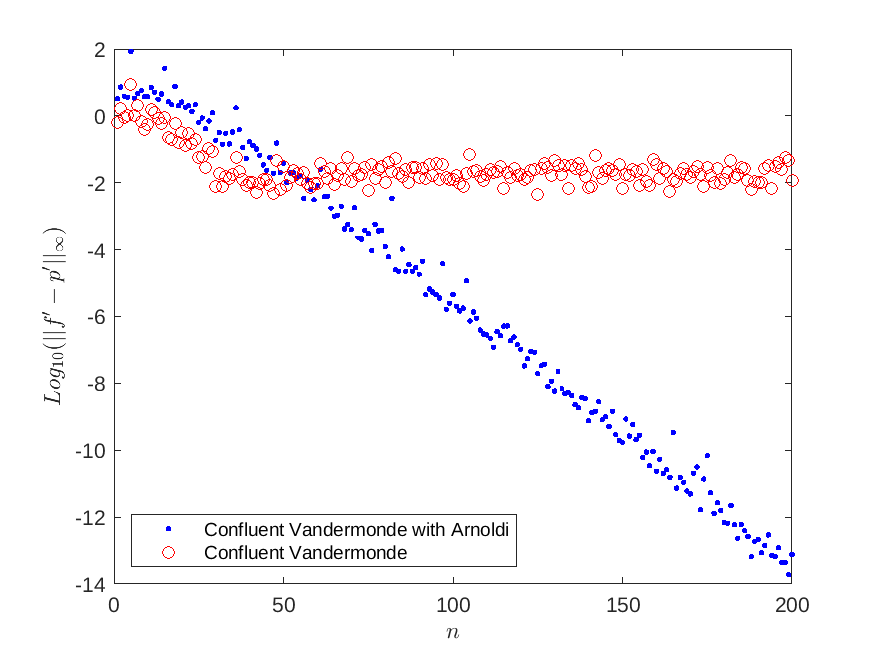}}
\centerline{\includegraphics[width=8.3cm]{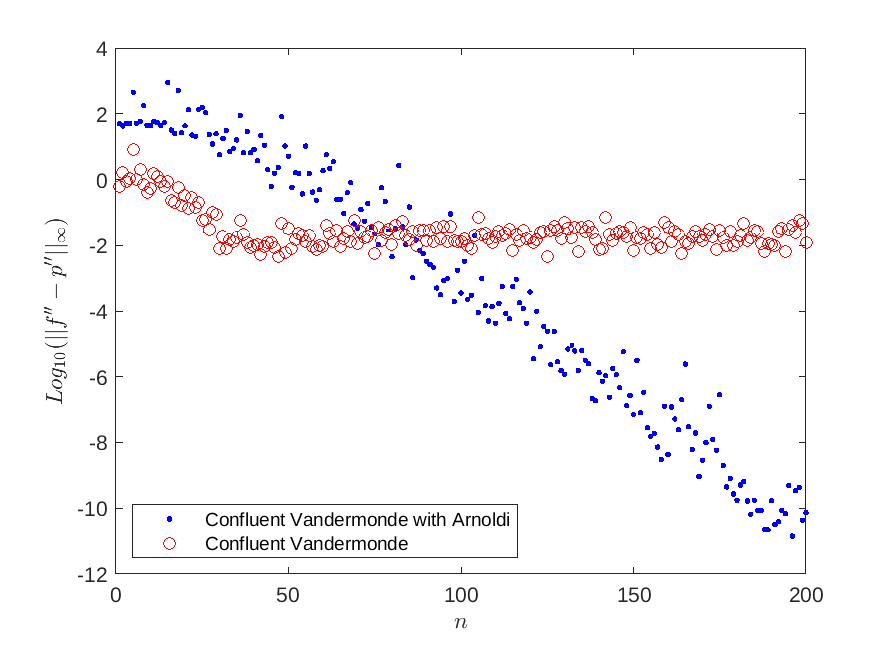}}
\caption{Semi-log representation of the errors for Example \ref{ex2} using Arnoldi with re-orthogonalization on first kind Chebyshev nodes.}\label{fg2}
\end{figure}

The example is solved via Vandermonde without Arnoldi as well, and as it can be deduced from Figure \ref{fg2}, Vandermonde with Arnoldi gives us highly accurate approximations for $f,~f'$, and $f''$, while without Arnoldi we just achieve $2$ digits of accuracy.
\begin{example}\label{ex3}
Let us consider $f(t) = |t|$ on $[-1,1]$. The approximation is the solution of the rational least squares problem. $2000$ exponentially clustered nodes around zero along with weights $1$ are considered as the node-weight pairs. The poles are chosen as the complex numbers $\xi_{j}=\pm \sqrt{|\delta_{j}|}i$, $j=1,2,\ldots,n$, with so-called tapered exponentially clustered poles \cite{28,29}
	\[\delta_{j}=-2 \exp{( -\sqrt{2}\pi(\sqrt{n}-\sqrt{j}))},\quad j=1,2,\ldots,n.\]
\end{example}
We evaluate the approximation taking the strategy of Arnoldi and provide the attained results in Table \ref{tab3} and Figure \ref{fg3}.
\begin{table}[!h]
\begin{center}
\small{
\caption{The errors of Example \ref{ex3} for different $n$.\label{tab3}}\vspace{-0.2cm}
\setlength\tabcolsep{3.5pt}
\begin{tabular}{@{}c|c@{}}
\hline
\hline
 $n$&   $||f-r||_{\infty}$ \\
 \hline
 \hline
 \\[-8pt]
 15 &$4.44 \times 10^{-5}$\\
 30 &$1.27 \times 10^{-6}$\\
60&$8.23 \times 10^{-9}$ \\
120 &$2.71 \times 10^{-9}$\\
\hline
\hline
\end{tabular}}\end{center}\vspace{-0.5cm}
\end{table}
\begin{figure}[!ht]
	\centerline{\includegraphics[width=9cm,height=7cm]{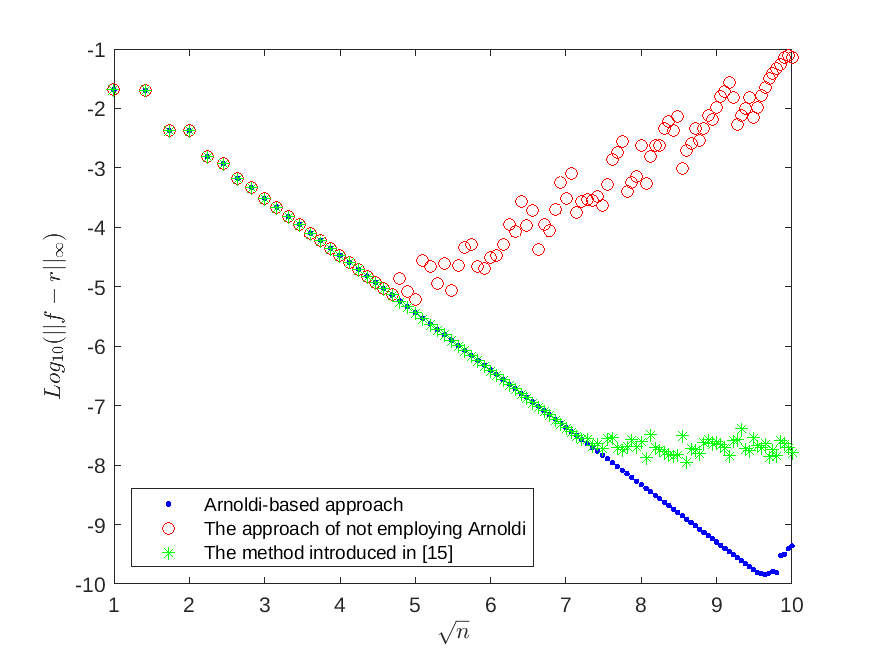}}
	\caption{Semi-log representation of the errors for Example \ref{ex3}.}\label{fg3}
\end{figure}

Regardless of the less smoothness of the function, we arrive at a regular error reduction which is a valid evidence of a strong performance of this approach. Moreover, one can conclude from Figure \ref{fg3} that solving the system \eqref{Cauchy system} directly is not a stable approach due to the dramatic increase in the errors for $n>60$. We also apply the basis $\{1,\frac{\xi_1}{t-\xi_1},\ldots, \frac{\xi_n}{t-\xi_n}\}$ introduced in \cite{28} instead of the basis $\{1,\frac{1}{t-\xi_1},\ldots, \frac{1}{t-\xi_n}\}$ and solve the relevant system of equations directly\footnote{A special case of \cite{28}, with polynomial part restricted to degree $0$, is considered.}. As it is shown in Figure \ref{fg5}, the rational orthogonal basis leads to a bit more accurate results.

It is noteworthy that we also implemented the rational Arnoldi procedure with re-orthogonalization for this example. However, the results are not presented since the accuracy remained unchanged by this process.
\begin{example}\label{ex1}
Consider the function of interest $f(t) = \sqrt{t}$ on $(0,1]$ which we will approximate using $2000$ exponentially clustered nodes around zero along with weights $1$. Tapered exponentially clustered poles \cite{28,29}
	\[\xi_{j}=-2 \exp{( -\sqrt{2}\pi(\sqrt{n}-\sqrt{j}))},\quad j=1,2,\ldots,n,\]
	are deemed as the set of poles.
\end{example}
For this example, the Arnoldi-based approach is implemented, and the numerical errors are reported in Table \ref{tab1}. To make a comparison with the approach of not employing Arnoldi and the method introduced in \cite{28}, we can refer to Figure \ref{fg1}. Although the rational orthogonal basis addresses us toward a better result in the previous example, we can observe that our approach is less stable than the approach of \cite{28} in this example, according to Figure \ref{fg1}.
\begin{table}[!h]
	\begin{center}
		\small{
			\caption{The errors of Example \ref{ex1} for different $n$.\label{tab1}}\vspace{-0.2cm}
			\setlength\tabcolsep{3.5pt}
			\begin{tabular}{@{}c|c@{}}
				\hline
				\hline
				$n$&   $||f-r||_{\infty} $ \\
				\hline
				\hline
				\\[-8pt]
				15 &$2.71 \times 10^{-4}$\\
				30 &$7.19 \times 10^{-6}$\\
				60&$2.29 \times 10^{-6}$ \\
				120 &$2.40 \times 10^{-2}$\\
				\hline
				\hline
	\end{tabular}}\end{center}\vspace{-0.5cm}
\end{table}
\begin{figure}[!ht]
	\centerline{\includegraphics[width=9cm,height=7cm]{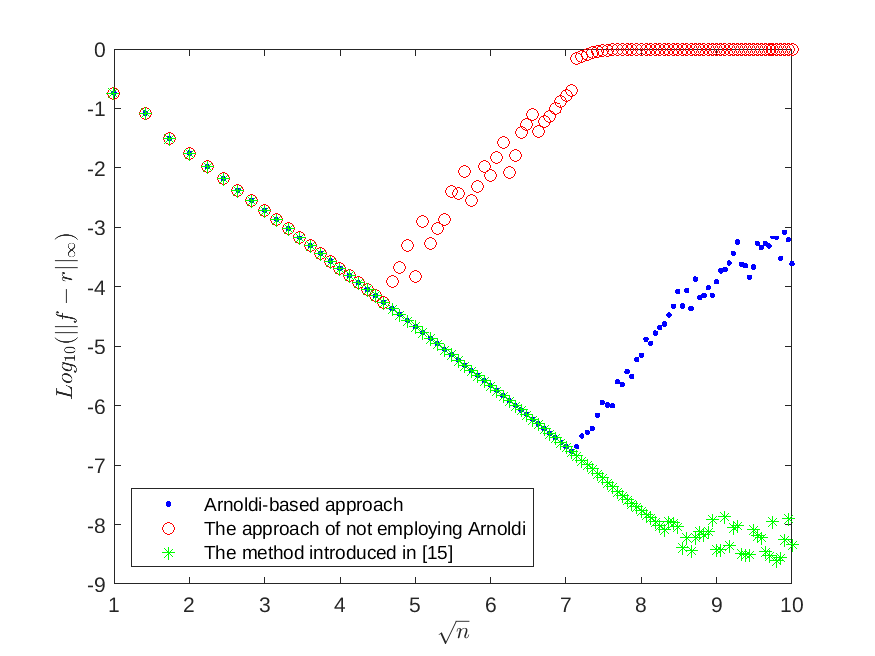}}
	\caption{Semi-log representation of the errors for Example \ref{ex1}.}\label{fg1}
\end{figure}
\begin{example}\label{ex5}
 To assess the method introduced for the Sobolev rational least squares problem, let $f(t) =t\sqrt{t}$ on $(0,1]$, where the information available are $\{f_j^{(s_{j})}\}_{j=1}^{\sigma}$, $s_{j} \leq 1$, and $s_{j}$ are chosen randomly.
\end{example}
Considering the same nodes and poles as the previous example, we tackle this problem based on the Sobolev orthogonal rational functions. The numerical results are depicted through Table \ref{tab5} and Figure \ref{fg5}.
\begin{table}[!h]
	\begin{center}
		\small{
			\caption{The errors of Example \ref{ex5} for different $n$.\label{tab5}}\vspace{-0.2cm}
			\setlength\tabcolsep{6.5pt}
			\begin{tabular}{@{}c|cc@{}}
				\hline
				\hline
				$n$&  $||f-r||_{\infty} $ & $||f'-r'||_{\infty}$\\
				\hline
				\hline
				10 &$4.30\times 10^{-3}$&  $4.95\times 10^{-2}$\\
				20  &$2.39 \times 10^{-4}$& $3.30\times 10^{-3}$  \\
				40&$6.56\times 10^{-6}$& $9.11\times 10^{-5}$\\
				80 &$5.83 \times 10^{-8}$& $3.57\times 10^{-7}$\\
				\hline
				\hline
	\end{tabular}}\end{center}\vspace{-0.5cm}
\end{table}
\begin{figure}[!ht]
	\centerline{\includegraphics[width=8.3cm]{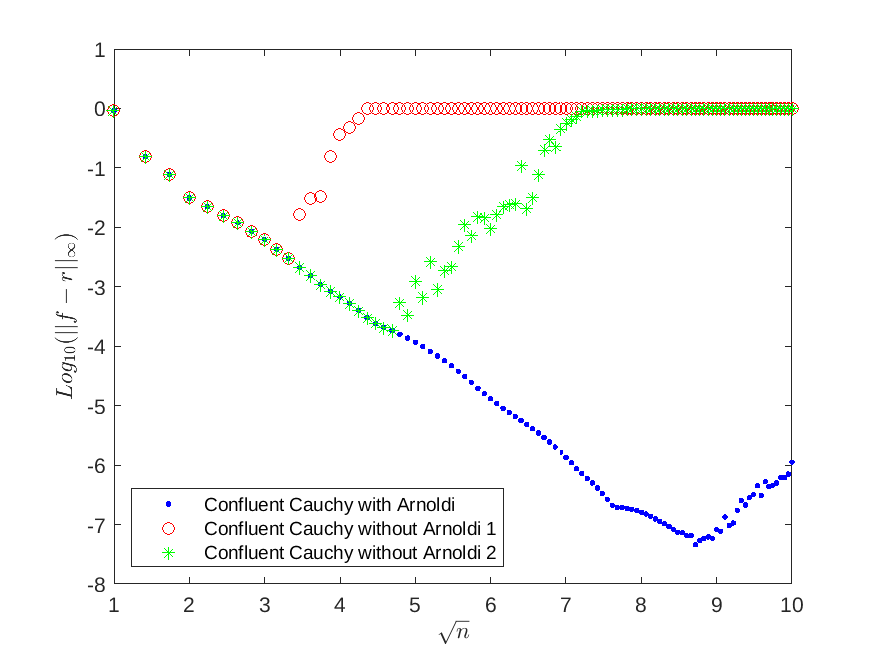},\includegraphics[width=8.3cm]{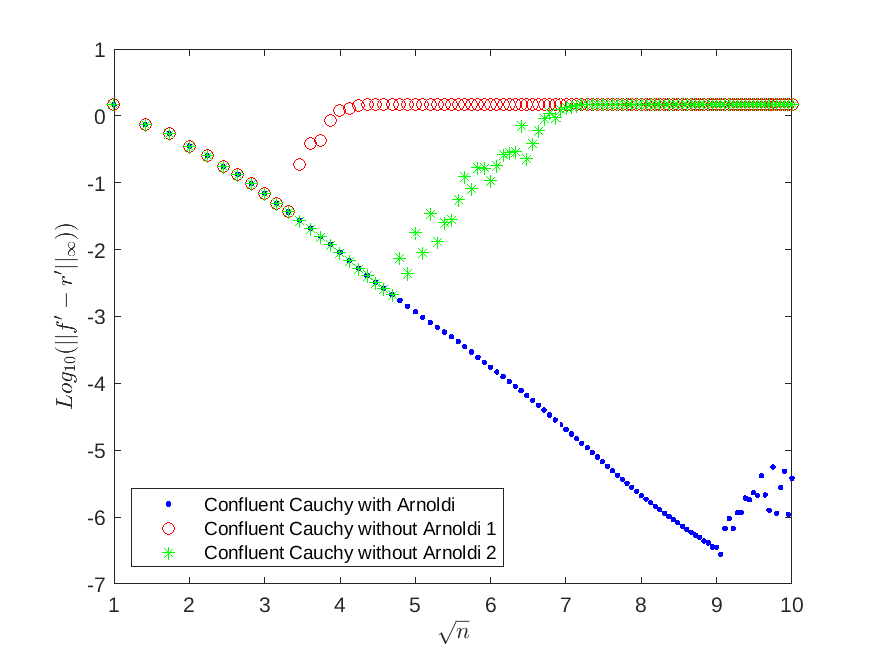}}
	\caption{Semi-log representation of the errors for Example \ref{ex5}.}\label{fg5}
\end{figure}

In view of the presented results in Table \ref{tab5}, seven and six digits of accuracy is achieved for the approximation of $r$ and $r'$ respectively, regardless of the singularity behavior of $f$. In addition, Figure \ref{fg5} leads us to conclude that the methods based on both $\{1,\frac{1}{t-\xi_1},\ldots, \frac{1}{t-\xi_n}\}$ and $\{1,\frac{\xi_1}{t-\xi_1},\ldots, \frac{\xi_n}{t-\xi_n}\}$ are unstable for this example, while the Sobolev rational basis works stably.
\section{Conclusion}
In this article, we reformulated the problem of the least squares fitting with an ill-conditioned basis to a problem based on orthogonal polynomials, Sobolev orthogonal polynomials, orthogonal rational functions, and Sobolev orthogonal rational functions. For this purpose, the connection between these bases and the orthonormal bases for the Krylov subspaces has been established which allowed us to use the Arnoldi procedure. The comparison of the Arnoldi-based approach and the approach of not employing Arnoldi has been made on each example of the last section.
\section*{Funding}
The research was partially supported by the Research Council KU Leuven (Belgium), project $C16/21/002$ (Manifactor: Factor Analysis for Maps into Manifolds) and by the Fund for Scientific Research -- Flanders (Belgium), projects $G0A9923N$ (Low rank tensor approximation techniques for up- and downdating of massive online time series clustering) and G0B0123N (Short recurrence relations for rational Krylov and orthogonal rational functions inspired by modified moments).
\section*{Declarations}
\textbf{Code and data availability} All code and numerical experiments are made publicly available on the Numa homepage \url{https://numa.cs.kuleuven.be/} under the software section, related to the Momentum project.

	\clearpage

\end{document}